\definecolor{grn}{rgb}{0,0.6,0}
\definecolor{mrn}{rgb}{0.3,0,0}
\definecolor{blue}{rgb}{0,0,0.7}
\definecolor{Mygray}{rgb}{0.75,0.75,0.75}
\definecolor{auburn}{rgb}{0.43, 0.21, 0.1}
\definecolor{britishracinggreen}{rgb}{0.0, 0.26, 0.15}
\definecolor{taupe}{rgb}{0.28, 0.24, 0.2}
\newtheorem{theorem}{Theorem}[section]
\newtheorem{propn}{Proposition}[section]
\newtheorem{cor}{Corollary}[section]
\newtheorem{lemma}{Lemma}[section]
\newtheorem{quest}{Question}[section]
\newtheorem{rmk}{Remark}[section]
\newtheorem{conj}{Conjecture}[section]
\begin{document}
\baselineskip=14.5pt
\title[$p$-rationality of consecutive quadratic fields]{On the $p$-rationality of consecutive quadratic fields}

\author{Jaitra Chattopadhyay, H Laxmi and Anupam Saikia}
\address[Jaitra Chattopadhyay, H Laxmi and Anupam Saikia]{Department of Mathematics, Indian Institute of Technology Guwahati, Guwahati - 781039, Assam, India}
\email[Jaitra Chattopadhyay]{jaitra@iitg.ac.in; chat.jaitra@gmail.com}

\email[H Laxmi]{hlaxmi@iitg.ac.in}

\email[Anupam Saikia]{a.saikia@iitg.ac.in}

\begin{abstract}
In 2016, in the work related to Galois representations, Greenberg conjectured the existence of multi-quadratic $p$-rational number fields of degree $2^{t}$ for any odd prime number $p$ and any integer $t \geq 1$. Using the criteria provided by him to check $p$-rationality for abelian number fields, certain infinite families of quadratic, biquadratic and triquadratic $p$-rational fields have been shown to exist in recent years. In this article, for any integer $k \geq 1$, we build upon the existing work and prove the existence of infinitely many prime numbers $p$ for which the imaginary quadratic fields $\mathbb{Q}(\sqrt{-(p - 1)}),\ldots,\mathbb{Q}(\sqrt{-(p - k)})$ and $\mathbb{Q}(\sqrt{-p(p - 1)}),\ldots, \mathbb{Q}(\sqrt{-p(p - k)})$ are all $p$-rational. This can be construed as analogous results in the spirit of Iizuka's conjecture on the divisibility of class numbers of consecutive quadratic fields. We also address a similar question of $p$-rationality for two consecutive real quadratic fields by proving the existence of infinitely many $p$-rational fields of the form $\mathbb{Q}(\sqrt{p^{2} + 1})$ and $\mathbb{Q}(\sqrt{p^{2} + 2})$. The result for imaginary quadratic fields is accomplished by producing infinitely many primes for which the corresponding consecutive discriminants have large square divisors and the same for real quadratic fields is proven using a result of Heath-Brown on the density of square-free values of polynomials at prime arguments.
\end{abstract}

\renewcommand{\thefootnote}{}

\footnote{2020 \emph{Mathematics Subject Classification}: Primary 11R29, Secondary 11R11.}

\footnote{\emph{Key words and phrases}: $p$-rational fields, Greenberg's conjecture, Indivisibility of class numbers.}

%\footnote{\emph{We confirm that all the data are included in the article.}}

\renewcommand{\thefootnote}{\arabic{footnote}}
\setcounter{footnote}{0}

\maketitle

\section{Introduction}

Let $p \geq 3$ be a prime number and $K$ be a number field. Let $M$ be the maximal $p$-ramified pro-$p$-extension of $K$ with Galois group ${\rm{Gal}}(M/K)$. The field $K$ is said to be $p$-rational if ${\rm{Gal}}(M/K)$ is a free pro-$p$-group. The notion of $p$-rationality was first introduced by Movaheddi (cf. \cite{mova-thesis}) in connection with the study of non-abelian number fields satisfying Leopoldt's conjecture. Interested readers are encouraged to look at \cite{assim}, \cite{gras1}, \cite{gras2}, \cite{gras3}, \cite{jaulent}, \cite{mova2} and \cite{mova3} for further details on this topic.

\smallskip

Recently, the study of $p$-rationality of number fields resurfaced in connection with the work of Greenberg \cite{greenberg} on the construction of Galois extensions of $\mathbb{Q}$ with Galois group being isomorphic to an open subgroup of $GL_{n}(\mathbb{Z}_{p})$. He proved that there exist continuous representations from the absolute Galois group ${\rm{Gal}}(\bar{\mathbb{Q}}/\mathbb{Q})$ of $\mathbb{Q}$ to the general linear group $GL_{n}(\mathbb{Z}_{p})$ for all integers $n \geq 4$, provided there exist $p$-rational multi-quadratic fields of arbitrarily large degree. He made a precise conjecture regarding the existence of such fields as follows.

\begin{conj} \cite{greenberg}\label{conjecture}
Let $p$ be an odd prime number and let $t \geq 1$ be an integer. Then there exists a $p$-rational number field whose Galois group over $\mathbb{Q}$ is isomorphic to $(\mathbb{Z}/2\mathbb{Z})^{t}$.
\end{conj}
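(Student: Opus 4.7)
The plan is to follow Greenberg's own framework for abelian $p$-rationality, with the honest caveat that Conjecture \ref{conjecture} is the open conjecture of \cite{greenberg}; I can therefore only sketch the inductive strategy on which all partial results in this area, including the present paper, are built. Fix an odd prime $p$. A multi-quadratic field $K=\mathbb{Q}(\sqrt{d_{1}},\ldots,\sqrt{d_{t}})$ has $2^{t}-1$ quadratic subfields, and Greenberg's criterion for abelian $p$-rationality reduces the $p$-rationality of $K$ to two sorts of conditions: (i) each of the $2^{t}-1$ quadratic subfields must itself be $p$-rational, and (ii) a non-degeneracy condition, expressible modulo $p$, on the $p$-adic regulator of the global units of $K$. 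Any attempt therefore splits into constructing square-free $d_{i}$ for which (i) holds, and then checking (ii) as a residual compatibility condition.

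The natural approach is induction on $t$. The case $t=1$ is classical, and there exist explicit parametric families of $p$-rational quadratic fields, of which the families $\mathbb{Q}(\sqrt{-(p-j)})$ and $\mathbb{Q}(\sqrt{-p(p-j)})$ announced in the abstract are typical; for each such field the criterion is an explicit congruence involving the fundamental unit in the real case or the $p$-part of the class number in the imaginary case. For the inductive step, assume $K_{t-1}$ is $p$-rational and look for a square-free $d_{t}$ such that $\mathbb{Q}(\sqrt{d_{t}\prod_{i\in S}d_{i}})$ is $p$-rational for every $S\subseteq\{1,\ldots,t-1\}$. Each of the resulting $2^{t-1}$ individual conditions is a Chebotarev-style splitting condition on $d_{t}$, and combined they cut out, at least heuristically, a positive density of admissible candidates. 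One would then try to arrange (ii) for $K_{t}$ by imposing further local conditions at $p$ which translate into non-vanishing statements for $p$-adic logarithms of the fundamental units of $K_{t-1}(\sqrt{d_{t}})$.

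The main obstacle is the exponential explosion of the constraints in $t$. At the $t$-th stage there are $2^{t-1}$ simultaneous $p$-rationality conditions on $d_{t}$, and under the usual independence heuristic the probability that a random $d_{t}$ satisfies all of them decays like $c^{2^{t}}$; this defeats any uniform Chebotarev argument as $t\to\infty$. Worse, condition (ii) becomes harder as $t$ grows: the unit group gains rank and there is no intrinsic mechanism forcing a $(2^{t}-1)\times(2^{t}-1)$ $p$-adic determinant to be non-zero modulo $p$. Overcoming these twin difficulties uniformly in $t$ is precisely the content of Conjecture \ref{conjecture}, and it is why the existing literature handles only small $t$ or specific parametric families. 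In the remainder of the paper I would expect the author to turn, as the abstract signals, to the two concrete partial results for consecutive quadratic fields rather than to the conjecture in full generality.
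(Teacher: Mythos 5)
This statement is Greenberg's conjecture, quoted from \cite{greenberg}; the paper offers no proof of it and proves only partial results in its direction (Theorems \ref{imaginary-case} and \ref{real-case}, which give consecutive $p$-rational quadratic fields, i.e.\ the case $t=1$ in families). You correctly recognize that the statement is open and refrain from claiming a proof, which is the right call: there is nothing in the paper to compare your argument against, and no review of a ``gap'' relative to the paper's proof is possible.

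One factual correction to your sketch. For an abelian field $K$ of degree prime to $p$ (which $2^{t}$ is, since $p$ is odd), Greenberg's criterion (Proposition 3.6 of \cite{greenberg}, restated as Proposition \ref{later-added} in this paper) says that $K$ is $p$-rational \emph{if and only if} every cyclic subfield is $p$-rational. For a multiquadratic field this means the $2^{t}-1$ quadratic subfields, and nothing more: your condition (ii), a separate non-degeneracy condition on a $p$-adic regulator of the full unit group of $K$, is not part of the criterion in this setting. (The regulator/unit condition enters only at the level of each individual \emph{real quadratic} subfield, via Proposition 4.1 of \cite{greenberg}, i.e.\ the fundamental unit not being a local $p$-th power.) The genuine obstruction you identify --- the exponential growth in $t$ of the number of quadratic subfields that must simultaneously be $p$-rational --- is the correct and only one, and it is exactly why the literature, including this paper, stops at small $t$ and at parametric families of quadratic fields.
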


Over the past few years, mathematicians have addressed Conjecture \ref{conjecture} for small values of $t$. In \cite{ray}, Barbulescu and Ray proved that the quadratic field $\mathbb{Q}(\sqrt{p^{2} - 1})$ is $p$-rational for all prime numbers $p$. In \cite{jpaa}, Benmerieme and Movahhedi proved that $\mathbb{Q}(\sqrt{p(p + 2)})$, $\mathbb{Q}(\sqrt{p(p - 2)})$ and the bi-quadratic field $\mathbb{Q}(\sqrt{p(p + 2)},\sqrt{p(p - 2)})$ are all $p$-rational for all prime $p \geq 5$. These results confirm Conjecture \ref{conjecture} for all odd primes $p$ and for $t =1 \mbox{ and } 2$. Recently, Koperecz addressed the case $t = 3$ and proved that the imaginary triquadratic field $\mathbb{Q}(\sqrt{p(p + 2)},\sqrt{p(p - 2)},\sqrt{-1})$ is $p$-rational for infinitely many primes $p$.

\smallskip

In this paper, we consider the question of $p$-rationality for certain {\it consecutive} imaginary as well as real quadratic fields. The motivation for such consideration comes essentially from a recent conjecture of Iizuka \cite{iizuka} and various works centering that. We state Iizuka's conjecture as follows
.

\begin{conj} \cite{iizuka}\label{iizuka-conj}
Let $\ell$ be a prime number and let $k \geq 1$ be an integer. Then there exist an infinite family of quadratic fields, real or imaginary, of the form $\mathbb{Q}(\sqrt{d}),\mathbb{Q}(\sqrt{d + 1}),\ldots,\mathbb{Q}(\sqrt{d + k})$ with $d \in \mathbb{Z}$ such that the class numbers of all of them are divisible by $\ell$.
\end{conj}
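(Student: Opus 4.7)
The plan is to reduce Conjecture \ref{iizuka-conj} to a simultaneous polynomial parameterisation problem. For each fixed index $i \in \{0, 1, \ldots, k\}$, the classical constructions of Ankeny--Chowla, Yamamoto and Nagell produce infinitely many quadratic fields $\mathbb{Q}(\sqrt{D})$ with $\ell \mid h(\mathbb{Q}(\sqrt{D}))$ whenever $D$ admits a representation of the shape $D = a^{2} - 4b^{\ell}$, together with mild technical conditions ensuring that the ideal above $b$ in $\mathbb{Q}(\sqrt{D})$ is non-principal. The first step is therefore to introduce free parameters $t_{1}, \ldots, t_{r}$ and to seek polynomials $f_{0}(\mathbf{t}), \ldots, f_{k}(\mathbf{t}) \in \mathbb{Z}[\mathbf{t}]$, each of Ankeny--Chowla shape, subject to the translation identities $f_{i}(\mathbf{t}) = f_{0}(\mathbf{t}) + i$ for $1 \leq i \leq k$.

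The second step is to study the resulting parameter variety $V_{\ell,k} \subset \mathbb{A}^{r}$ and to exhibit infinitely many integral points on it. For $k = 1$ this reduces to solving a single Diophantine relation of the form $a_{1}^{2} - 4 b_{1}^{\ell} = a_{0}^{2} - 4 b_{0}^{\ell} + 1$ in four variables, which can be attacked by fixing $b_{0}, b_{1}$ small and letting $a_{0}, a_{1}$ vary subject to a Pell-type constraint. For $k = 2$, Iizuka's own work for $\ell = 3$, together with subsequent extensions by Chattopadhyay--Muthukrishnan and others, already exhibits explicit parametric families in three consecutive fields and suggests the same pattern should be adaptable to larger $\ell$ by replacing the underlying cubic construction with an appropriate $\ell$-th power analogue. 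The third step is then to verify that for the integral points on $V_{\ell,k}$, the ideal classes produced in $\mathrm{Cl}(\mathbb{Q}(\sqrt{f_{i}(\mathbf{t})}))$ genuinely have order divisible by $\ell$ rather than collapsing, and that the $k + 1$ fields are pairwise distinct for infinitely many choices of $\mathbf{t}$; this is typically handled by archimedean estimates on the parameters forcing the Minkowski bound to exceed the norm of $b_{i}$.

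The main obstacle is twofold. As $k$ grows, the number of translation constraints $f_{i} = f_{0} + i$ quickly outstrips the dimension $r$ of the parameter space that one can preserve while maintaining the rigid Ankeny--Chowla shape, so $V_{\ell,k}$ generically becomes zero-dimensional or empty; any gain requires either raising the degree of the $f_{i}$ or passing to a more flexible construction (for example, $D = a^{2} - c \cdot b^{\ell}$ with $c$ allowed to vary). Second, a fully uniform result over all pairs $(\ell, k)$ appears to lie beyond reach of the parametric technology available and would seem to require a genuinely new input — for instance, a sieve controlling the joint $\ell$-rank of class groups of $\mathbb{Q}(\sqrt{d}), \mathbb{Q}(\sqrt{d+1}), \ldots, \mathbb{Q}(\sqrt{d+k})$ as $d$ varies, or an algebraic-geometric interpretation of the translation constraints as rational points on a higher-dimensional variety whose Zariski density one can establish unconditionally. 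Absent such an input, the realistic target is to settle the conjecture for small $k$ and a fixed prime $\ell$, which is exactly the spirit in which the $p$-rationality analogue pursued in the present paper is formulated.
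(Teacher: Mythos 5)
This statement is a \emph{conjecture} (Iizuka's conjecture, quoted from \cite{iizuka}), not a result the paper proves: the authors cite it purely as motivation and explicitly record that it is only known for $k = 1$ (settled for $\ell = 3$ by Iizuka and for all primes $\ell$ in \cite{tvm}), with various partial cases in \cite{self-acta}, \cite{self-rama}, \cite{azizul} and \cite{rama-simul}. So there is no ``paper's own proof'' to compare against, and nothing short of a genuinely new theorem would settle the statement as written.

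Your proposal is not a proof and, to your credit, essentially says so in its final paragraph. Concretely, the gap is exactly where you locate it: the Ankeny--Chowla/Yamamoto/Nagell machinery produces $\ell$-divisibility for a \emph{single} discriminant of the shape $a^{2} - 4b^{\ell}$, and imposing the $k$ translation constraints $f_{i} = f_{0} + i$ on $k+1$ such shapes gives a Diophantine system whose integral points you cannot produce for general $(\ell, k)$ --- the parameter variety $V_{\ell,k}$ has no known supply of integral points once $k \geq 2$, and no argument in your sketch supplies one. The steps you describe as ``can be attacked'' and ``suggests the same pattern should be adaptable'' are precisely the open content of the conjecture. Your third step (non-collapse of the ideal classes and distinctness of the fields) is standard and would be fine \emph{if} the points existed, but that is conditional on the unsolved second step. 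The one actionable correction: your outline would be an accurate description of how the known $k = 1$ cases are proved, and of why the general case is open; it should be presented as a survey of the state of the art, not as a proof attempt.
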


Iizuka \cite{iizuka} himself settled the conjecture for imaginary quadratic fields for $\ell = 3$ and $k = 1$. Recently, Conjecture \ref{iizuka-conj} has been settled for $k = 1$ and for all primes $\ell$ in \cite{tvm}. Some general cases have also been tackled in \cite{self-acta}, \cite{self-rama}, \cite{azizul} and \cite{rama-simul}. In a similar spirit as that of Conjecture \ref{iizuka-conj}, Cherubini et al. \cite{imrn} proved that there exist arbitrarily large number of consecutive real quadratic fields all of whose class numbers are larger than a given positive real number. In the light of Conjecture \ref{conjecture} and Conjecture \ref{iizuka-conj}, we ask the following question.

\begin{quest}\label{qn}
For any given integer $k \geq 2$, are there infinitely many primes $p$ for which there $k$ consecutive real or imaginary $p$-rational quadratic fields?
\end{quest}

We affirmatively answer Question \ref{qn} for imaginary quadratic fields by proving the following theorem.

\begin{theorem}\label{imaginary-case}
For any integer $k \geq 1$, there exist infinitely many primes $p$ such that the imaginary quadratic fields $\mathbb{Q}(\sqrt{-(p - 1)}),\ldots,\mathbb{Q}(\sqrt{-(p - k)})$ are simultaneously $p$-rational. Also, there exist infinitely many primes $p$ such that $\mathbb{Q}(\sqrt{-p(p - 1)}),\ldots, \mathbb{Q}(\sqrt{-p(p - k)})$ are all $p$-rational.
\end{theorem}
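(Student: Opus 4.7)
The strategy is to apply Greenberg's criterion for $p$-rationality of abelian number fields, which, for an imaginary quadratic field $K = \mathbb{Q}(\sqrt{-d})$ with $d$ squarefree, $p \geq 5$ unramified in $K$, and $K$ distinct from $\mathbb{Q}(i),\mathbb{Q}(\sqrt{-3})$, essentially reduces to the requirement $p \nmid h_K$ (since for such $K$ the unit group contributes nothing beyond torsion when $p > 3$). The plan is therefore to produce infinitely many primes $p$ such that all $k$ fields $\mathbb{Q}(\sqrt{-(p-i)})$ simultaneously have class numbers coprime to $p$, which will be engineered by forcing each $p-i$ to carry a large square divisor so that the underlying field is genuinely ``small''.

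Concretely, I would fix $k$ distinct odd primes $q_1, \ldots, q_k$ and, via the Chinese Remainder Theorem, combine the congruences $p \equiv i \pmod{q_i^2}$ for $i = 1, \ldots, k$ into a single congruence modulo $Q := \prod_{i=1}^{k} q_i^2$. Dirichlet's theorem on primes in arithmetic progressions then yields infinitely many primes $p$ in this residue class. For any such $p$ and any $i$, one has $q_i^2 \mid (p-i)$, so
\[
\mathbb{Q}(\sqrt{-(p-i)}) = \mathbb{Q}(\sqrt{-d_i}),
\]
where $d_i$ denotes the squarefree part of $(p-i)/q_i^2$ and satisfies $d_i < p/q_i^2$. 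The Minkowski bound then gives $h_{\mathbb{Q}(\sqrt{-d_i})} \ll \sqrt{d_i}\,\log d_i \ll \sqrt{p}\,(\log p)/q_i$, which is strictly less than $p$ for all sufficiently large $p$. Combined with $p \nmid d_i$ (ensuring $p$ is unramified in each $\mathbb{Q}(\sqrt{-(p-i)})$), Greenberg's criterion is met, proving the first assertion.

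For the second family $\mathbb{Q}(\sqrt{-p(p-i)})$, the same congruence construction applies, but now $p$ divides the radicand and hence ramifies in the field. One must therefore invoke the version of Greenberg's criterion appropriate to the ramified setting; this still requires $p \nmid h_K$, which follows from essentially the same class number bound applied to the new radicand $p d_i < p^2/q_i^2$, together with a local condition at the unique prime of $K$ above $p$. I expect this local condition to be the main obstacle, as it cannot be bypassed by class number considerations alone and will likely require an explicit analysis of the principal units in the completion $\mathbb{Q}_p(\sqrt{-p(p-i)})$, possibly together with a refinement of the auxiliary moduli $q_i$ (for instance by imposing additional congruences modulo $p$, handled inside the Dirichlet step) to guarantee that the condition persists for infinitely many $p$.
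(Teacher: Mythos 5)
Your argument for the first family is essentially sound, and in fact the auxiliary square divisors are not even needed there: the radicand $p-i$ is already $O(p)$, so the class number bound $h \ll \sqrt{p}\,\log p = o(p)$ gives $p \nmid h$ directly. The real content of the theorem is the second family, and that is exactly where your construction breaks down. With \emph{fixed} primes $q_1,\dots,q_k$, the squarefree part of $p(p-i)$ is only bounded by $p^2/q_i^2$, so your own bound yields $h_{\mathbb{Q}(\sqrt{-p(p-i)})} \ll (p/q_i)\log p$, which is \emph{larger} than $p$ for $p$ large since $q_i$ is constant. To conclude $h < p$ one needs the square divisor of $p-i$ to grow with $p$ — a square factor exceeding $(\log p)^{4}$ suffices, giving $h \ll p/\log p$. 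That forces the moduli in your Chinese Remainder step to be of size about $(\log X)^{2}$, i.e.\ the modulus $Q$ is no longer fixed, and plain Dirichlet is not enough: one needs a prime-counting theorem for arithmetic progressions uniform in moduli up to $(\log X)^{c}$ (a Siegel--Walfisz/Page type estimate), which is precisely how the paper proceeds (its Proposition \ref{key-proposition}, combined with a device guaranteeing the moduli can be chosen pairwise coprime and coprime to the residues).

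Separately, the ``local condition at the prime above $p$'' that you expect to be the main obstacle for the second family does not exist. Greenberg's criterion for \emph{imaginary} quadratic fields (Proposition \ref{greenberg-proposition}(2)) says that for $p \geq 5$ the field is $p$-rational as soon as $p \nmid h_K$, with no unramifiedness hypothesis and no condition on units — the unit group is finite, so the fundamental-unit condition only arises in the real case. Thus once the class number bound is repaired as above, the second family follows by exactly the same one-line application of the criterion as the first.
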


%\begin{rmk}
%If for some $i$ and $j$ with $1 \leq i < j \leq k$, we have $\mathbb{Q}(\sqrt{-(p - i)}) = \mathbb{Q}(\sqrt{-(p - j)})$, then we must have $p - i = w^{2}(p - j)$ for some integer $w \geq 2$. This implies that $j - i = (w^{2} - 1)(p - j)$. Consequently, from the inequality $p - k \leq p - j = \dfrac{j - i}{w^{2} - 1} \leq \dfrac{k}{3}$, we obtain $p \leq \dfrac{4k}{3}$.
%\end{rmk}

To address Question \ref{qn} for real quadratic fields, we prove the following theorem.

\begin{theorem}\label{real-case}
For sufficiently large prime number $p$, the real quadratic field $\mathbb{Q}(\sqrt{p^{2} + 1})$ is $p$-rational whenever $p^{2} + 1$ is square-free. The same holds true for $\mathbb{Q}(\sqrt{p^{2} - 2})$, $\mathbb{Q}(\sqrt{p^{2} + 2})$ and $\mathbb{Q}(\sqrt{p^{2} + 4})$ whenever the respective fundamental discriminants are square-free.
\end{theorem}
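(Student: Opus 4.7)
The plan is to apply Greenberg's criterion for the $p$-rationality of abelian number fields. For a real quadratic $K=\mathbb{Q}(\sqrt{D})$ with odd prime $p$ coprime to the discriminant $d_K$, this criterion reduces, via the $p$-adic class number formula
\[ L_p(1,\chi_K)=-\bigl(1-\chi_K(p)/p\bigr)\cdot\frac{2h_K\log_p(\varepsilon)}{\sqrt{d_K}}, \]
to the conjunction of two conditions: (i) $p\nmid h_K$, and (ii) $v_p(\log_p\varepsilon)=1$ after embedding $K$ into the completion at a prime above $p$. The first step is to identify the fundamental unit $\varepsilon$ in each of the four families via its defining Pell-type equation: $\varepsilon=p+\sqrt{p^2+1}$ (norm $-1$) for $c=1$; $\varepsilon=(p^2-1)+p\sqrt{p^2-2}$ for $c=-2$ and $\varepsilon=(p^2+1)+p\sqrt{p^2+2}$ for $c=2$, both of norm $+1$; and $\varepsilon=(p+\sqrt{p^2+4})/2$ (norm $-1$) for $c=4$, which is integral since $p^2+4\equiv 1\pmod{4}$ for odd $p$. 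When the relevant discriminant is square-free, these are the genuine fundamental units rather than proper powers thereof.

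With the units in hand, condition (ii) is verified by a direct $p$-adic computation. Fix an embedding $\iota\colon K\hookrightarrow\mathbb{Q}_p$ when $p$ splits in $K$, and into the unramified quadratic extension of $\mathbb{Q}_p$ when $p$ is inert; a Taylor expansion then yields $\iota(\sqrt{p^2+c})\equiv\sqrt{c}\pmod{p^2}$. Substituting into the closed forms for $\varepsilon$ gives $\iota(\varepsilon)\equiv\pm 1+p\,u\pmod{p^2}$ with $u$ a $p$-adic unit (explicitly, $u=1$ for $c=1$, $u=\sqrt{c}$ for $c=\pm 2$, and $u=1/2$ for $c=4$). A binomial expansion of $\iota(\varepsilon)^{p^f-1}$ modulo $p^2$, where $f\in\{1,2\}$ is the residue degree, then gives $\iota(\varepsilon)^{p^f-1}\equiv 1+p\,u'\pmod{p^2}$ with $u'$ a unit, confirming $v_p(\log_p\varepsilon)=1$.

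The main obstacle is condition (i), where the square-freeness hypothesis plays its essential role. Heath-Brown's theorem on square-free values of irreducible polynomials at prime arguments supplies a positive density of primes $p$ for which $p^2+c$ (equivalently, the associated fundamental discriminant) is square-free. For such $p$, the integral structure of $\mathcal{O}_K$ is transparent, and Dirichlet's formula $2h_KR_K=\sqrt{d_K}\,L(1,\chi_K)$, together with $R_K=\log\varepsilon=O(\log p)$ from the explicit unit and the P\'olya--Vinogradov bound $L(1,\chi_K)=O(\log p)$, yields $h_K=O(p)$. The remaining task is to sharpen this to the strict inequality $h_K<p$ for all sufficiently large $p$. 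I would proceed by combining the analytic bound with the observation that the prime $\mathfrak{p}$ above $p$ satisfies $\mathfrak{p}^2=(\sqrt{p^2+c}-k)$ for a suitable small integer $k$, so that $[\mathfrak{p}]$ has order at most $2$ in $\mathrm{Cl}(K)$, and with genus-theoretic control over the $2$-Sylow subgroup of $\mathrm{Cl}(K)$ coming from the factorization of the (square-free) discriminant. While condition (ii) is algebraic and essentially routine once $\varepsilon$ is known, this class-number bound is the principal analytic difficulty of the proof.
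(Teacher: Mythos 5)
Your overall strategy matches the paper's: verify the two conditions of the Greenberg/Benmerieme--Movahhedi criterion, namely $p \nmid h_K$ and that $\varepsilon$ is not a local $p$-th power, with the unit condition checked by an explicit computation modulo $p^{2}$ from the closed-form fundamental unit, and the class-number condition attacked through Dirichlet's formula with $R_K = \log\varepsilon \sim \log(2p)$. Your treatment of the unit condition is sound and is essentially the paper's computation (the paper phrases it as $\mathcal{F}_q \not\equiv 1 \pmod{p^{2}}$ for the generalized Fibonacci number $\mathcal{F}_q = (\varepsilon^{q}-\varepsilon^{-q})/(\varepsilon-\varepsilon^{-1})$, but the content is the same expansion you describe).

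The gap is in condition (i). With $\sqrt{d_K} \sim 2p$ and $R_K \sim \log(2p)$, a P\'olya--Vinogradov-type bound $L(1,\chi_K) \le \bigl(\tfrac12+o(1)\bigr)\log d_K \sim \log(2p)$ gives only $h_K \le (1+o(1))\,p$: the implied constant is exactly $1$, so this does not yield $h_K < p$, and ``$h_K = O(p)$'' with an unspecified constant is useless for ruling out $p \mid h_K$. Your proposed repair does not close this: the order of $[\mathfrak{p}]$ in $\mathrm{Cl}(K)$ and genus-theoretic control of the $2$-Sylow subgroup say nothing about whether $p$ divides $h_K$ (and the relation $\mathfrak{p}^{2} = (\sqrt{p^{2}+c}-k)$ only even makes sense for $c = 1, 4$). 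What the paper uses instead is Louboutin's refined bound $L(1,\chi_K) \le \tfrac14\bigl(\log d_K + \kappa_2\bigr)$, valid here because $\chi_K(2) \ne 1$ (e.g.\ $2$ ramifies in $\mathbb{Q}(\sqrt{p^{2}+1})$ since $p^{2}+1 \equiv 2 \pmod 4$); the extra factor of $\tfrac12$ over the trivial bound is precisely what brings the estimate down to $h_K \le \bigl(\tfrac12+o(1)\bigr)p < p$. Without such a saving in the constant for $L(1,\chi_K)$ (or some other genuinely new input), your argument does not reach the required inequality.
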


\begin{cor}\label{corollary}
There exist infinitely many prime numbers $p$ such that all the real quadratic fields of Theorem \ref{real-case} are simultaneously $p$-rational.
\end{cor}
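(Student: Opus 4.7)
The plan is to deduce Corollary \ref{corollary} from Theorem \ref{real-case} by producing a positive-density set of primes $p$ at which all four square-freeness hypotheses of Theorem \ref{real-case} hold simultaneously. The square-free part of each integer $p^{2}+c$ for $c \in \{-2, 1, 2, 4\}$ controls whether the associated fundamental discriminant is square-free (up to a bounded $2$-power correction depending only on the residue of $p$ modulo a small fixed modulus), so after fixing a suitable arithmetic progression for $p$, the problem reduces to showing that
\[
F(x) \;=\; (x^{2}-2)(x^{2}+1)(x^{2}+2)(x^{2}+4)
\]
takes square-free values at infinitely many prime arguments $p$ in that progression.

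The polynomial $F$ is separable, its irreducible factors are pairwise coprime in $\mathbb{Z}[x]$, and a direct check at the primes $2$, $3$, $5$ shows that $F$ carries no fixed square divisor at prime arguments within an appropriate residue class. With these inputs secured, I would apply the square-free sieve of Heath-Brown, which is the same tool used to establish Theorem \ref{real-case}, now to $F$: for each prime $q$, Hensel's lemma applied to each quadratic factor shows that the number of solutions of $F(n) \equiv 0 \pmod{q^{2}}$ is bounded uniformly in $q$, so the local densities are summable. The main term of the sieve then produces a positive proportion of primes $p \le X$ for which $F(p)$ is square-free, and each such $p$ makes all four of the $p^{2}+c$ simultaneously square-free. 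Invoking Theorem \ref{real-case} in each residue class then gives simultaneous $p$-rationality of the four fields.

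The principal technical hurdle is that $F$ has degree eight, which lies beyond the cleanest unconditional range of Heath-Brown's single-polynomial theorem on square-free values at prime arguments. An equivalent route that sidesteps this is to apply Heath-Brown's theorem to each of the four quadratic factors of $F$ individually, producing four positive-density sets of primes $S_{-2}, S_{1}, S_{2}, S_{4}$, and then to show that $\bigcap_{c} S_{c}$ still has positive density via a multidimensional inclusion-exclusion. The key observation is that for all but the finitely many primes $q$ dividing any difference $c_{1}-c_{2}$, the congruences $q^{2} \mid p^{2}+c_{1}$ and $q^{2} \mid p^{2}+c_{2}$ impose independent restrictions on $p \pmod{q^{2}}$, so a Chinese remainder style decomposition reduces the joint density to a product of local densities minus an absolutely convergent error. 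Verifying local compatibility and the absence of obstruction at the small primes $2$, $3$, $5$ is the only remaining calculational step, and is handled by fixing $p$ in an explicit residue class modulo $120$.
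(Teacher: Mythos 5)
Your reduction of the corollary to the simultaneous square-freeness of $p^{2}-2$, $p^{2}+1$, $p^{2}+2$, $p^{2}+4$ at prime arguments is exactly the paper's starting point, and applying Heath-Brown to each quadratic factor separately (your fallback route) is also what the paper does. But the step where you combine the four conditions is where your proposal runs into trouble. Your first route, sieving the degree-eight product $F$, is unavailable with the cited result: Proposition \ref{square-free proposition} applies to $f(X)=X^{d}+c$ and requires $k\geq (5d+3)/9$, so for $d=8$ one would need $k\geq 5$, not $k=2$. Your second route asserts that a ``Chinese remainder style decomposition reduces the joint density to a product of local densities minus an absolutely convergent error,'' but that is precisely a simultaneous square-free sieve for four polynomials at prime arguments; controlling the error from large moduli there is essentially as hard as the degree-eight problem you were trying to avoid, and nothing in the cited literature hands it to you. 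As written, the key step is not justified.

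The idea you are missing is that no independence of the local conditions is needed at all. Heath-Brown's theorem gives each set $A_{c}=\{p : p^{2}+c \text{ square-free}\}$ a relative density equal to an explicit Euler product, and these densities are numerically large: the paper bounds them below by $0.834$, $0.931$, $0.920$ and $0.834$. Since each exceeds $3/4$, the trivial inclusion bound
\[
\liminf_{X\to\infty}\frac{\#\bigl(\bigcap_{i}A_{i}(X)\bigr)}{\pi(X)} \;\geq\; \sum_{i}\delta(A_{i}) - 3 \;\geq\; 0.519 \;>\;0
\]
already yields a positive lower density for the intersection, with no analysis of joint local conditions, no residue class modulo $120$, and no convergence argument for a joint sieve. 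That single observation replaces the entire technical apparatus you propose. (Your preliminary worry about $2$-power corrections to the discriminant is also unnecessary: Theorem \ref{real-case} is stated directly in terms of square-freeness of $p^{2}+c$, so the corollary follows as soon as the four integers are simultaneously square-free.)
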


\begin{rmk}
In \cite{ray}, Barbulescu and Ray proved the $p$-rationality of $\mathbb{Q}(\sqrt{p^{2} - 1})$ for all primes $p$. This, together with Theorem \ref{real-case} and Corollary \ref{corollary}, provides us with pairs of real quadratic fields of the form $(\mathbb{Q}(\sqrt{p^{2} - 2}),\mathbb{Q}(\sqrt{p^{2} - 1}))$ and $(\mathbb{Q}(\sqrt{p^{2} + 1}),\mathbb{Q}(\sqrt{p^{2} + 2}))$ that are $p$-rational for infinitely many primes $p$. This affirmatively answers Question \ref{qn} for real quadratic fields for $k = 2$.
\end{rmk}

\bigskip

\section{Preliminaries}

We begin with the following proposition due to Greenberg \cite{greenberg} to check for the $p$-rationality of abelian number fields.

\begin{propn} \cite[Proposition 3.6]{greenberg}\label{later-added}
Let $p$ be a prime number and let $K$ be an abelian number field such that the degree $[K : \mathbb{Q}]$ is indivisible by $p$. Then $K$ is $p$-rational if and only if every field $L$ with $\mathbb{Q} \subseteq L \subseteq K$ and $L/\mathbb{Q}$ cyclic is $p$-rational.
\end{propn}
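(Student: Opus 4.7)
The plan is to reduce $p$-rationality of $K$ to a character-component condition on a single finite module. Since $\gcd(p, [K:\mathbb{Q}]) = 1$ and $K/\mathbb{Q}$ is abelian, Leopoldt's conjecture for $K$ at $p$ is known (Brumer), so the $p$-rationality of $K$ --- defined by the freeness of $\mathrm{Gal}(M_K/K)$, with $M_K$ the maximal $p$-ramified pro-$p$-extension of $K$ --- is equivalent to the vanishing of the finite $\mathbb{Z}_p[G]$-module $T_K := \mathrm{tor}_{\mathbb{Z}_p}(\mathrm{Gal}(M_K/K)^{\mathrm{ab}})$, where $G = \mathrm{Gal}(K/\mathbb{Q})$. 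The first step of the proof is to record this reformulation, which is standard when $K/\mathbb{Q}$ is abelian with $p \nmid [K:\mathbb{Q}]$.

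Because $\gcd(p,|G|) = 1$, the group ring $\mathbb{Z}_p[G]$ is semisimple and splits as $\prod_{\chi} \mathcal{O}_\chi$, where $\chi$ ranges over $\mathrm{Gal}(\overline{\mathbb{Q}}_p/\mathbb{Q}_p)$-orbits of $\overline{\mathbb{Q}}_p$-valued characters of $G$ and $\mathcal{O}_\chi$ is the corresponding local ring. Accordingly $T_K = \bigoplus_{\chi} T_K^{\chi}$. To each $\chi$ I attach $L_{\chi} := K^{\ker \chi}$, which is cyclic since $G/\ker\chi$ embeds in $\overline{\mathbb{Q}}_p^{\times}$; conversely, every cyclic intermediate field of $K/\mathbb{Q}$ occurs as $L_\chi$ for the faithful characters of its own Galois group. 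Thus the character set of $G$ is naturally partitioned by the cyclic intermediate fields that the characters descend to.

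The crucial arithmetic input is the canonical isomorphism $T_K^{\chi} \cong T_{L_\chi}^{\chi}$. Since $[K:L_\chi]$ is a unit in $\mathbb{Z}_p$, restriction and corestriction for the extension $K/L_\chi$ compose to an isomorphism on $p$-primary modules, so Hochschild--Serre collapses on $\chi$-isotypic parts and --- together with functoriality of the maximal $p$-ramified pro-$p$-extension under passage to an intermediate field --- identifies $T_K^\chi$ with $T_{L_\chi}^\chi$. Granting this, the equivalence follows formally: $K$ is $p$-rational iff $T_K^\chi = 0$ for every $\chi$ iff $T_{L_\chi}^\chi = 0$ for every $\chi$ iff each cyclic intermediate field $L = L_\chi$ satisfies $T_L^\psi = 0$ for every character $\psi$ of $\mathrm{Gal}(L/\mathbb{Q})$ iff every cyclic intermediate field of $K$ is $p$-rational. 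I expect this $\chi$-component identification to be the main obstacle of the argument: it demands a careful check that the idempotent decomposition of $\mathbb{Z}_p[G]$ is compatible both with the $p$-ramification condition cutting out $M_K$ and with the natural maps between $\mathrm{Gal}(M_K/K)$ and $\mathrm{Gal}(M_{L_\chi}/L_\chi)$; everything else is formal bookkeeping with character components.
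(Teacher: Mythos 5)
The paper does not prove this proposition; it is quoted verbatim from Greenberg \cite[Proposition 3.6]{greenberg} as a black box, so there is no in-paper argument to compare against. Your proof is essentially Greenberg's own: reduce $p$-rationality to the vanishing of the torsion module $T_K$ (using Brumer's theorem on Leopoldt for abelian fields), split $T_K$ into $\chi$-components via the semisimplicity of $\mathbb{Z}_p[G]$ when $p \nmid |G|$, and identify $T_K^{\chi}$ with $T_{L_\chi}^{\chi}$ by restriction--corestriction; the bookkeeping that regroups the characters of $G$ by the cyclic subfields they cut out is also handled correctly. This is the standard and correct route.
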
 

To deal with the $p$-rationality of quadratic fields, we recall the following criteria due to Greenberg \cite{greenberg}.

\begin{propn} \cite[Proposition 4.1]{greenberg}\label{greenberg-proposition}
Let $K$ be a quadratic field and let $p \geq 5$ be a prime number. 
\begin{enumerate}
\item If $K$ is real, then it is $p$-rational if and only if $p$ does not divide the class number $h_{K}$ of $K$ and the fundamental unit of $K$ is not a $p^{\rm{th}}$-power in the completion $K_{\mathfrak{p}}$ for some prime $\mathfrak{p}$ of $K$ lying above $p$.

\item If $K$ is imaginary, then $K$ is $p$-rational if and only if the Hilbert $p$-class field of $K$ is contained in the anti-cyclotomic $\mathbb{Z}_{p}$-extension of $K$. In particular, $K$ is $p$-rational if $p$ does not divide $h_{K}$.
\end{enumerate}
\end{propn}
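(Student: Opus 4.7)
The plan is to extract $p$-rationality of a quadratic field from the arithmetic of $\mathcal{G}_K^{\mathrm{ab}}$, where $\mathcal{G}_K := \mathrm{Gal}(M/K)$, using class field theory. First I would reduce $p$-rationality to a torsion statement: a standard cohomological criterion (Koch/Shafarevich-type) characterizes $K$ as $p$-rational precisely when $\mathcal{G}_K$ is pro-$p$ free on $r_2(K) + 1$ generators, and, under Leopoldt's conjecture at $p$, this in turn is equivalent to the vanishing of the torsion subgroup $T_K$ of $\mathcal{G}_K^{\mathrm{ab}}$. Since $K$ is abelian, Leopoldt holds by Brumer's theorem, so the task collapses to a computation of $T_K$.

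The key tool is the class-field-theoretic exact sequence
\begin{equation*}
1 \longrightarrow \overline{E_K} \longrightarrow \prod_{\mathfrak{p} \mid p} U_{\mathfrak{p}}^{(1)} \longrightarrow \mathrm{Gal}(M^{\mathrm{ab}}/H_K) \longrightarrow 1,
\end{equation*}
where $\overline{E_K}$ is the $p$-adic closure of the diagonal image of the global units, $U_{\mathfrak{p}}^{(1)}$ is the group of principal local units, and $H_K$ is the Hilbert class field. Combining this with the surjection $\mathcal{G}_K^{\mathrm{ab}} \twoheadrightarrow \mathrm{Cl}(K) \otimes \mathbb{Z}_p$, one obtains Gras' formula, which identifies $T_K$ in terms of the $p$-part of the class group and the cokernel of the regulator-type map $E_K \otimes \mathbb{Z}_p \to \prod_{\mathfrak{p} \mid p} U_{\mathfrak{p}}^{(1)}$.

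For real quadratic $K$, Dirichlet gives $E_K = \{\pm 1\} \cdot \langle \varepsilon \rangle$ with $\varepsilon$ the fundamental unit. A direct examination of the sequence then shows $T_K = 0$ if and only if $p \nmid h_K$ (killing the class-group contribution) and the image of $\varepsilon$ in $\prod_{\mathfrak{p} \mid p} U_{\mathfrak{p}}^{(1)} / (U_{\mathfrak{p}}^{(1)})^{p}$ is non-trivial (killing the regulator contribution), giving assertion (1). For imaginary $K$ with $p \geq 5$, the global units are roots of unity of order coprime to $p$, so $\overline{E_K}$ disappears mod $p$; there are exactly two independent $\mathbb{Z}_p$-extensions of $K$ unramified outside $p$ (the cyclotomic and the anti-cyclotomic), and $T_K$ is identified with the obstruction to the $p$-Hilbert class field embedding into the anti-cyclotomic $\mathbb{Z}_p$-extension, giving assertion (2); the ``in particular'' statement is then immediate since $\mathrm{Cl}(K)[p^{\infty}]$ is the input measuring how much of the Hilbert $p$-class field sits above $K$.

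The main obstacle is to set up Gras' torsion formula cleanly and, in the real case, to handle the decomposition of $p$ in $K$ uniformly: depending on whether $p$ splits, is inert, or is ramified, the $\mathbb{F}_p$-dimension of $\prod_{\mathfrak{p} \mid p} U_{\mathfrak{p}}^{(1)}/(U_{\mathfrak{p}}^{(1)})^{p}$ changes, and one must verify that the two listed conditions ($p \nmid h_K$ and $\varepsilon$ not a local $p$-th power at some $\mathfrak{p}$) together exactly force $T_K = 0$ in every case. In the imaginary case, the subtlety is isolating the anti-cyclotomic $\mathbb{Z}_p$-line inside $\mathcal{G}_K^{\mathrm{ab}}$ and showing that the Hilbert $p$-class field can only provide $p$-rational room when it embeds into this specific $\mathbb{Z}_p$-extension rather than any other.
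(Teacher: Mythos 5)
This proposition is not proved in the paper at all: it is imported verbatim as \cite[Proposition 4.1]{greenberg}, so there is no in-paper argument to compare against. Measured against Greenberg's (and Movahhedi's) actual proof, your outline follows essentially the same, standard route: reduce $p$-rationality to the vanishing of the $\mathbb{Z}_p$-torsion $T_K$ of $\mathrm{Gal}(M^{\mathrm{ab}}/K)$ (legitimate here since Leopoldt holds for abelian fields by Brumer), then compute $T_K$ from the class-field-theoretic exact sequence relating $\overline{E_K}$, the principal local units above $p$, and the $p$-class group. Two steps in your sketch are asserted where the real content lies, and both are settled by the same ramification observation. In (1), the implication ``$T_K=0 \Rightarrow p\nmid h_K$'' is not merely ``killing the class-group contribution'': since $\mathrm{Gal}(M^{\mathrm{ab}}/K)$ has $\mathbb{Z}_p$-rank one, $T_K=0$ forces $M^{\mathrm{ab}}$ to coincide with the cyclotomic $\mathbb{Z}_p$-extension, which is totally ramified above $p$, hence admits no nontrivial unramified subextension; that is what excludes $p\mid h_K$. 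In (2), what the exact sequence gives directly is $T_K=0 \iff H_K^{(p)}\subseteq \widetilde{K}$, where $\widetilde{K}$ is the compositum of the two independent $\mathbb{Z}_p$-extensions (using that for $p\ge 5$ the global units are roots of unity of order prime to $p$ and the local principal units are $\mathbb{Z}_p$-free); passing from $\widetilde{K}$ to the anticyclotomic line alone again uses that the cyclotomic direction is ramified at $p$, so an everywhere-unramified extension inside $\widetilde{K}$ must already lie in the anticyclotomic $\mathbb{Z}_p$-extension. With those two observations supplied, and the case analysis on the splitting of $p$ that you already flag, your plan closes up into the known proof.
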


In view of Proposition \ref{greenberg-proposition}, to prove Theorem \ref{imaginary-case}, it is sufficient to prove that the class numbers of all the fields of Theorem \ref{imaginary-case} are indivisible by $p$. We accomplish this by using Dirichlet's class number formula for the aforementioned fields and showing that all of their discriminants have large square factors, which is essentially a modification of the arguments used in \cite{kop}. We state a proposition from \cite{der} which will be used to produce infinitely many primes $p$ with $p - 1,\ldots, p - k$ simultaneously having large square factors.

\begin{propn}\cite[Proposition 1]{der}\label{der}
Let $m \geq 2$ be an integer. Then there exist a polynomial $f(X) = \displaystyle\prod_{i = 1}^{m}(a_{i}X + b_{i}) \in \mathbb{Z}[X]$ such that $\gcd(b_{i}, b_{j}) = 1 = \gcd(a_{i}k + b_{i}, a_{j}k + b_{j})$ for all $k \in \mathbb{Z}$ and for all $i, j \in \{1,\ldots,m\}$ with $i \neq j$.
\end{propn}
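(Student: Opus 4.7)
My plan is to construct the polynomial explicitly with a common leading coefficient, relying on an elementary divisibility argument rather than any density or analytic input.

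First I would pick $b_{1}, \ldots, b_{m}$ to be $m$ distinct pairwise coprime positive integers (the simplest choice is to let $b_{i}$ be the $i$-th prime), so that $\gcd(b_{i}, b_{j}) = 1$ for $i \neq j$ is immediate. Next, I would let $N$ denote the radical of $\prod_{1 \leq i < j \leq m}(b_{j} - b_{i})$, that is, the product of the distinct prime divisors of the (nonzero) pairwise differences, and set $a_{1} = a_{2} = \cdots = a_{m} = N$. The polynomial
\[
f(X) \;=\; \prod_{i=1}^{m}(NX + b_{i}) \;=\; \prod_{i=1}^{m}(a_{i}X + b_{i}) \;\in\; \mathbb{Z}[X]
\]
is then of the required shape.

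To verify the nontrivial gcd condition, I would suppose that some prime $q$ divides both $Nk + b_{i}$ and $Nk + b_{j}$ for some $k \in \mathbb{Z}$ and some indices $i \neq j$. Subtracting shows $q \mid b_{i} - b_{j}$, which by the definition of $N$ forces $q \mid N$; hence $q \mid Nk$, and from $q \mid Nk + b_{i}$ we conclude $q \mid b_{i}$, and similarly $q \mid b_{j}$. This contradicts $\gcd(b_{i}, b_{j}) = 1$, so no such prime $q$ exists and $\gcd(a_{i}k + b_{i},\, a_{j}k + b_{j}) = 1$ for every $k \in \mathbb{Z}$ and every $i \neq j$.

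The only real obstacle is spotting the right construction: natural first attempts such as taking the $a_{i}$ pairwise coprime, or enforcing $a_{j}b_{i} - a_{i}b_{j} = \pm 1$ for all pairs, turn out to be too rigid for $m \geq 3$. The trick of choosing a common $a_{i} = N$ that absorbs every prime dividing some difference $b_{j} - b_{i}$ neatly disposes of all such primes at once, after which the verification is a one-line divisibility argument with no case analysis required.
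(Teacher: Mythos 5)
Your construction is correct, and it is worth noting at the outset that the paper you are being compared against does not actually prove this statement: it imports it verbatim as \cite[Proposition~1]{der} and uses it as a black box in the proof of Proposition~\ref{key-proposition}. So there is no in-paper argument to match; what you have supplied is a self-contained elementary proof of the cited result. Checking your details: taking $b_{i}$ to be the $i$-th prime gives pairwise coprimality of the $b_{i}$ for free; the differences $b_{j}-b_{i}$ are nonzero, so $N$ is a well-defined positive integer (equal to $1$ only in the degenerate case $m=2$, $\{b_{1},b_{2}\}=\{2,3\}$, which causes no harm); and the divisibility argument is airtight, including the boundary case where one of the values $Nk+b_{i}$ vanishes, since a prime dividing $\gcd(Nk+b_{i},Nk+b_{j})$ still divides the difference $b_{i}-b_{j}$, hence divides $N$, hence divides both $b_{i}$ and $b_{j}$ --- a contradiction. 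Your observation that a common leading coefficient $a_{1}=\cdots=a_{m}=N$ suffices is a genuinely clean feature: it even strengthens the statement slightly, and it is fully compatible with the way Proposition~\ref{der} is used later in the paper, where the authors substitute $X\mapsto rX$ into the linear forms to additionally force coprimality with a fixed integer $r$; your forms $NrX+b_{i}$ would serve that purpose identically. No gaps.
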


Now, we prove the following proposition which plays a crucial role in the proof of Theorem \ref{imaginary-case}. This is a generalization of Proposition 4 of \cite{kop}.

\begin{propn}\label{key-proposition}
For a given real number $A > 0$ and any given finitely many integers $r_{1},\ldots ,r_{s}$, there exist infinitely many prime numbers $p$ such that $p - r_{i}$ has a square factor larger than $(\log p)^{A}$ for each $i \in \{1,\ldots ,s\}$.
\end{propn}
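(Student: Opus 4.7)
The strategy is to use the Chinese Remainder Theorem to construct an arithmetic progression in which every element $n$ automatically satisfies $q_i^2 \mid n - r_i$ for a chosen collection of auxiliary primes $q_1,\ldots,q_s$, and then to appeal to Linnik's theorem on the least prime in an arithmetic progression to locate a prime in this progression whose size is polynomially controlled by the modulus. Without loss of generality, assume that the $r_i$ are pairwise distinct (otherwise merge duplicates).

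Fix a large parameter $T$, to be specified later in terms of $A$, $s$, and the $r_i$. By the prime number theorem, there are at least $s$ primes in $[T,2T]$ once $T$ is sufficiently large, so we may pick distinct primes $q_1,\ldots,q_s \in [T,2T]$; additionally taking $T > \max_i|r_i|$ guarantees $q_i \nmid r_i$ for each $i$. Set $M := (q_1 q_2 \cdots q_s)^2$. Since the moduli $q_i^2$ are pairwise coprime, the Chinese Remainder Theorem yields a unique residue $R \pmod{M}$ with $R \equiv r_i \pmod{q_i^2}$ for every $i$, and $R \equiv r_i \not\equiv 0 \pmod{q_i}$ forces $\gcd(R,M)=1$.

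By Linnik's theorem there is an absolute constant $L>0$ such that the least prime $p$ with $p \equiv R \pmod{M}$ satisfies $p \leq M^L \leq (2T)^{2sL}$. For this prime, $q_i^2 \mid p - r_i$ by construction, furnishing a square divisor of $p-r_i$ of size at least $T^2$. The estimate $\log p \leq L \log M \leq 2sL \log(2T)$ gives $(\log p)^A \leq (2sL\log(2T))^A$, and since $T^2/(\log T)^A \to \infty$, we conclude $q_i^2 \geq T^2 > (\log p)^A$ as soon as $T$ is large enough in terms of $A$, $s$, and $L$.

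To obtain infinitely many primes with this property, let $T$ range over a strictly increasing sequence $T_n \to \infty$. Since $q_i^2 \mid p-r_i$ and $q_i^2$ eventually exceeds $\max_{i\neq j}|r_i - r_j|$, the equality $p = r_i$ would force $r_j \equiv r_i \pmod{q_j^2}$ for every $j \neq i$, contradicting distinctness; hence $p \geq T^2 - \max_i|r_i| \to \infty$, producing infinitely many distinct primes satisfying the conclusion. The decisive ingredient here is Linnik's theorem: Dirichlet's theorem alone would guarantee primes in the progression but not control their size, whereas the polynomial bound $p \leq M^L$ is exactly what one needs to compare $q_i^2$ with $(\log p)^A$. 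Proposition \ref{der} is not strictly necessary for the argument above, but it can be invoked to manufacture pairwise coprime congruence moduli from a single polynomial parametrization when a more flexible construction is desired.
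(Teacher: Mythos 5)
Your argument is correct (modulo the same implicit assumption the paper makes, namely $r_i \neq 0$ for all $i$: if some $r_i = 0$ then $p - r_i = p$ has no nontrivial square factor, and in the paper's proof this case is silently excluded by the requirement $\gcd(m_i, r) = 1$ with $r = \prod_i r_i$), but it takes a genuinely different route from the paper. The paper keeps the moduli tiny: it manufactures pairwise coprime integers $m_1,\ldots,m_s$ of size about $(\log X)^{A}$ via the linear-form gadget of Proposition \ref{der}, solves $x \equiv r_i \pmod{m_i^2}$ by CRT to get a modulus $D \leq (\log X)^{c}$, and then finds a prime $p \in (X/v, X)$ in that residue class using the prime number theorem for arithmetic progressions uniformly for moduli up to a fixed power of $\log X$ (the Page--Walfisz range, cited from Elliott). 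That input is classical and, as a bonus, produces a positive proportion (roughly $\pi(X)/\phi(D)$) of qualifying primes up to $X$, with square divisors of size about $(\log p)^{2A}$ --- just enough for the stated bound. You instead take the moduli to be squares of genuinely large distinct primes $q_i \in [T,2T]$ (a cleaner way to get pairwise coprimality than Proposition \ref{der}) and pay for the huge modulus $M \approx T^{2s}$ by invoking Linnik's theorem to bound the least prime in the progression by $M^{L}$. This is a far deeper analytic input, but it buys a much stronger conclusion: your square divisors $q_i^2 \geq T^2$ are a fixed positive power of $p$ (roughly $p^{1/(sL)}$), not merely a power of $\log p$, though you obtain only one prime per choice of $T$ rather than a density statement. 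Your handling of the edge cases (excluding $p = r_i$ and forcing $p \to \infty$ as $T \to \infty$) is sound.
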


\begin{proof}
Let $r = \displaystyle\prod_{i = 1}^{s}r_{i}$ and let $\mathcal{P}$ be the set of all prime numbers dividing $r$. For an arbitrary but fixed positive integer $m$, by Proposition \ref{der}, there exist polynomials $f_{i}(X) = (a_{i}X + b_{i}) \in \mathbb{Z}[X]$ for each $i \in \{1,\ldots,m\}$ such that $\gcd(b_{i},b_{j}) = 1$ and $\gcd(a_{i}k + b_{i},a_{j}k + b_{j}) = 1$ for all $k \in \mathbb{Z}$ and $i \neq j$. Let $g_{i}(X) := f_{i}(rX) = a_{i}rX + b_{i}$. Then for any $k \in \mathbb{Z}$, we have $$\gcd(g_{i}(k),g_{j}(k)) = \gcd(f_{i}(rk),f_{j}(rk)) = 1 \mbox{ whenever } i \neq j.$$ 

Now, we notice that for any integer $k$, the fact $\gcd(g_{i}(k), r) = 1$ is equivalent to $\gcd(b_{i}, r) = 1$. We call $g_{i}(X)$ {\it admissible} if $\gcd(b_{i},r) = 1$. Since $\gcd(b_{i},b_{j}) = 1$ for $i \neq j$, we conclude that each $p \in \mathcal{P}$ divides $b_{i}$ for at most one $i$. Consequently, there can be at most $\# \mathcal{P}$ many $i$ for which $g_{i}(X)$ fails to be admissible. Since $m$ is arbitrary, we can discard all the $g_{i}(X)$ that fail to be admissible and therefore the collection of the remaining $g_{i}(X)$ is admissible. In other words, we can find arbitrarily many finite number of linear polynomials $a_{i}X + b_{i} \in \mathbb{Z}[X]$ such that $\gcd(a_{i}k + b_{i},a_{j}k + b_{j}) = 1$ for $i \neq j$ and $\gcd(a_{i}k + b_{i},r) = 1$ for all $k \in \mathbb{Z}$.

\smallskip

Now, for sufficiently large $X$, we can choose integers $m_{1},\ldots ,m_{s}$ such that $\frac{1}{v}(\log X)^{A} \leq m_{i} \leq (\log X)^{A},$ $\gcd(m_{i},r) = 1$ for all $i \in \{1,\ldots ,s\}$ and $\gcd(m_{i},m_{j}) = 1$ for all $i \neq j$. Now, by Chinese remainder theorem, the system of congruence 

\begin{equation}\label{series}
\left\{\begin{array}{ll}
x \equiv r_{1} \pmod {m_{1}^{2}};\\

\vdots\\

x \equiv r_{k} \pmod {m_{s}^{2}}
\end{array}\right.
\end{equation}
has a unique solution $\ell \pmod {D}$, where $D = \displaystyle\prod_{i = 1}^{s}m_{i}^{2} \leq (\log X)^{c}$.

\smallskip

Now, we proceed as in \cite{kop}. For large positive real number $X$ and positive integers $D$ and $\ell$ with $\gcd(D,\ell) = 1$, let $\pi(X,D,\ell) := \{p \in \mathbb{N} : p \geq 2 \mbox{ is prime and } p \equiv \ell \pmod {D}\}$. Then for a fixed real number $c > 0$, the estimate $$\pi(X,D,\ell) = \frac{1}{\phi(D)} \displaystyle\int_{2}^{X}\frac{dt}{\log t} + O(Xe^{-c_{1}\sqrt{\log X}})$$ holds uniformly for all integers $D \mbox{ and } \ell$ and $1 \leq D \leq (\log X)^{c}$ \cite[Lemma 2.9]{elliot}.

\smallskip

Let $A > 0$ and let $c = 2sA + 1$. From the inequality $\pi(X,D,\ell) - \pi(\frac{X}{v},D,\ell) > 0$ for an arbitrary but fixed integer $v \geq 2$, we conclude that there exist a prime $p \equiv \ell \pmod {D}$ with $\frac{X}{v} < p < X$ for sufficiently large $X$.

\smallskip

\smallskip

Now, we choose $X$ suitably large enough so that there exists a prime number $p \in (\frac{X}{v},X)$ and $p \equiv \ell \pmod {D}.$ Then $p \equiv r_{i} \pmod {m_{i}^{2}}$ for all $i \in \{1,\ldots ,s\}$. This completes the proof of the proposition.
\end{proof}

One way to prove that the class number $h_{K}$ of the imaginary quadratic field $K$ is indivisible by a prime $p$ is to show that $h_{K} < p$. This motivates us to look for suitable upper bounds for the class numbers of imaginary quadratic fields and the following proposition of Louboutin \cite{loub} serves the desired purpose.

\begin{propn} \cite[Proposition 2]{loub}\label{louboutin-proposition}
For an imaginary quadratic field $K$ with discriminant $d_{K}$ and class number $h_{K}$, we have $$h_{K} \leq \frac{\omega_{K} \cdot \sqrt{|d_{K}|}}{4\pi}\left(\log |d_{K}| + \frac{3}{2}\right),$$ where $\omega_{K}$ stands for the number of roots of unity in $K$.
\end{propn}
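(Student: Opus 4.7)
The plan is to reduce the asserted inequality to a bound on the Dirichlet $L$-value via the analytic class number formula. For an imaginary quadratic field $K$, Dirichlet's formula reads
$$h_K \;=\; \frac{\omega_K\,\sqrt{|d_K|}}{2\pi}\, L(1,\chi_K),$$
where $\chi_K$ is the Kronecker character attached to $K$. Under this identity, the proposition becomes equivalent to the explicit analytic upper bound
$$L(1,\chi_K) \;\leq\; \frac{1}{2}\!\left(\log|d_K| + \frac{3}{2}\right),$$
which is a classical-flavored estimate on the value of a non-principal real Dirichlet $L$-function at $s = 1$. The entire proof thus reduces to establishing this analytic inequality with sharp constants.

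To bound $L(1,\chi_K)$ I would use truncation together with partial summation. Setting $S(x) := \sum_{n \leq x} \chi_K(n)$, Abel summation gives the representation
$$L(1,\chi_K) \;=\; \int_1^{\infty}\frac{S(x)}{x^2}\,dx.$$
Splitting this integral at a parameter $Y$ of order $\sqrt{|d_K|}$, on $[1, Y]$ one applies the trivial bound $|S(x)| \leq x$, which contributes the main term $\log Y = \tfrac{1}{2}\log|d_K|$; on $[Y, \infty)$ one applies a character-sum estimate, and for the primitive quadratic $\chi_K$ one has $|S(x)| \leq \sqrt{|d_K|}$ up to an explicit small constant, so the tail contributes only a bounded amount. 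Combining the two pieces and optimizing the cut-off $Y$ produces an inequality of the shape $L(1,\chi_K) \leq \tfrac{1}{2}\log|d_K| + C$.

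The main obstacle is pinning down the precise constant $\tfrac{3}{4}$ (halving the $\tfrac{3}{2}$ of the proposition) rather than an unspecified $C$. The crude P\'olya--Vinogradov inequality $|S(x)| \ll \sqrt{|d_K|}\log|d_K|$ only delivers $L(1,\chi_K) \leq \tfrac{1}{2}\log|d_K| + O(\log\log|d_K|)$, which is too weak. To obtain the sharp constant I would pass instead to the closed-form identity
$$L(1,\chi_K) \;=\; -\frac{1}{|d_K|}\sum_{a=1}^{|d_K|-1}\chi_K(a)\,\psi\!\left(\frac{a}{|d_K|}\right),$$
valid because $\chi_K$ is non-principal, where $\psi$ denotes the digamma function; since $\chi_K$ is odd, the reflection formula $\psi(1-x)-\psi(x) = \pi\cot(\pi x)$ converts this into a finite cotangent sum. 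Bounding $\psi$ via the elementary expansion $\psi(x) = -\gamma - 1/x + \sum_{n \geq 1}\!\bigl(1/n - 1/(n+x)\bigr)$ and then applying partial summation to the resulting $\chi_K$-weighted finite sum reduces the estimate to an elementary inequality whose optimization yields precisely the constants $\tfrac{1}{2}$ and $\tfrac{3}{2}$ of the proposition.
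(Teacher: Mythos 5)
First, note that the paper does not prove this proposition at all: it is quoted directly from Louboutin's article (Proposition 2 of the cited reference), so there is no internal proof to compare against. Your opening reduction is nevertheless the right one and matches how the result is actually structured: by Dirichlet's class number formula $h_K = \frac{\omega_K\sqrt{|d_K|}}{2\pi}L(1,\chi_K)$, the statement is equivalent to $L(1,\chi_K)\leq \tfrac{1}{2}\log|d_K|+\tfrac{3}{4}$, and Louboutin's theorem is precisely an explicit bound of this shape for $L(1,\chi)$ at primitive characters.

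The gap is in the analytic core. Your first route hinges on the claim that $|S(x)|=|\sum_{n\leq x}\chi_K(n)|\leq \sqrt{|d_K|}$ up to a small explicit constant; no such pointwise bound is available, and it is in fact false in general: P\'olya--Vinogradov gives only $\sqrt{|d_K|}\log|d_K|$, and Paley-type omega results show that $\max_x|S(x)|\gg \sqrt{f}\log\log f$ for infinitely many fundamental discriminants, so the tail of your split integral cannot be made $O(1)$ this way. With only the bounds that are actually available ($|S(x)|\leq x$ and $|S(x)|\leq f$, splitting at $Y=f$) partial summation yields $L(1,\chi)\leq \log f + 1$, i.e.\ the full logarithm rather than half of it. Your fallback via the digamma/cotangent identity $L(1,\chi_K)=\frac{\pi}{2f}\sum_{a=1}^{f-1}\chi_K(a)\cot(\pi a/f)$ is a correct formula, but it does not rescue the constant: bounding the cotangent sum termwise again gives $\log f+O(1)$ (since $|\cot(\pi a/f)|\approx f/(\pi a)$ near the endpoints), and partial summation on the $\chi_K$-weights reintroduces exactly the character-sum problem you were trying to avoid. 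The missing idea --- and the one Louboutin actually uses --- is the functional equation, exploited either through a theta-integral representation of $\Lambda(s,\chi)$ or an approximate functional equation, which expresses $L(1,\chi)$ through harmonic-type sums of effective length $\sqrt{f}$ rather than $f$; the trivial estimate $\sum_{n\leq\sqrt{f}}1/n\approx\tfrac{1}{2}\log f$ is then what produces the crucial factor $\tfrac{1}{2}$, and a careful treatment of the weights produces the admissible additive constant. Without some form of this input, neither of your two routes can reach $\tfrac{1}{2}\log|d_K|+\tfrac{3}{4}$.
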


The next proposition is due to Heath-Brown \cite{heath-brown} and is about the square-free values of an integral polynomial at prime arguments. This plays an important role in the proof of Corollary \ref{corollary} because it requires us to consider the simultaneous square-free values of certain quadratic polynomials. We state it as follows.

\begin{propn} \cite[Theorem 1.2]{heath-brown}\label{square-free proposition}
Let $f(X) = X^{d} + c \in \mathbb{Z}[X]$ be irreducible and let $k \geq \frac{5d + 3}{9}$ be an integer. Suppose that for every prime number $p$, there exists an integer $n_{p}$ with $\gcd(p,n_{p}) = 1$ and $f(n_{p}) \not\equiv 0 \pmod{p^{k}}$. For a positive real number $X$, let $N^{\prime}_{f,k}(X) := \{p \mbox{ prime } : p \leq X \mbox{ and } f(p) \mbox{ is } k\mbox{-free}\}$. Then for any fixed $A > 0$, the estimate 

\begin{equation}\label{heath-brown estimate}
N^{\prime}_{f,k}(X) = \displaystyle\prod_{p}\left(1 - \frac{\rho^{\prime}_{f}(p^{k})}{\phi(p^{k})}\right)\pi (X) + O_{A}\left(\frac{X}{(\log X)^{A}}\right)
\end{equation}
holds and the implied constant depends on $A$. Here $\pi (X)$ stands for the number of primes up to $X$ and $\rho^{\prime}_{f}(d) := \#\{n \pmod {d} : \gcd(n,d) = 1 \mbox{ and } f(n) \equiv 0 \pmod {d}\}$.
\end{propn}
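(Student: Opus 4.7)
The plan is to attack this via a sieve: write the $k$-free indicator via Möbius inversion as
\[
\mathbf{1}_{\{n \text{ is } k\text{-free}\}} = \sum_{d^k \mid n} \mu(d),
\]
so that
\[
N'_{f,k}(X) = \sum_{d \geq 1} \mu(d)\, S(X,d), \qquad S(X,d) := \#\{p \text{ prime}: p \leq X,\ d^k \mid f(p)\}.
\]
Since $|f(p)| \ll X^d$, only $d \leq c X^{d/k}$ contribute. I would split the sum into three ranges: the small range $d \leq D_1 := (\log X)^B$ with $B = B(A)$ chosen later, the medium range $D_1 < d \leq D_2 := X^\beta$ with $\beta < d/k$, and the large range $D_2 < d \leq c X^{d/k}$.

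In the small range, Chinese remaindering reduces $S(X,d)$ to prime counts in the $\rho'_f(d^k)$ residue classes modulo $d^k$ that are coprime to $d^k$, to which Siegel--Walfisz applies uniformly and yields
\[
S(X,d) = \frac{\rho'_f(d^k)}{\phi(d^k)}\pi(X) + O\!\left(\frac{X}{(\log X)^{A+2}}\right).
\]
Summing against $\mu(d)$ and completing the tail to an infinite Euler product produces precisely the claimed main term; the product is convergent and nonzero thanks to the hypothesis on $n_p$, which guarantees $\rho'_f(p^k) < \phi(p^k)$ at every prime. In the medium range, the Brun--Titchmarsh inequality together with the elementary bound $\rho'_f(d^k) \ll_\varepsilon d^\varepsilon$ (a consequence of the irreducibility of $f$ and Hensel-type analysis modulo prime powers) gives $S(X,d) \ll X d^\varepsilon / (\phi(d^k)\log(X/d^k))$, and the sum over $D_1 < d \leq D_2$ is absorbed into the error term provided $\beta$ is chosen strictly less than $d/k$.

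The hard part, and the reason the hypothesis $k \geq (5d+3)/9$ is essential, is the large range $D_2 < d \leq cX^{d/k}$. Here the trivial bound $S(X,d) \ll \rho'_f(d^k)(X/d^k + 1)$ is useless once $d^k$ exceeds $X$, and there is no arithmetic-progression input strong enough. My approach would be to invoke Heath-Brown's determinant method: pairs $(p, m)$ with $p \leq X$ and $f(p) = d^k m$ are integer points on the curve $f(x) = d^k y$ with $x \leq X$ and $y \leq X^d/d^k$, and by $p$-adic approximation one covers such points with a bounded number of auxiliary algebraic curves whose integer points can be counted by Bombieri--Pila-type estimates. Optimizing the degree of the auxiliary polynomial against the Brun--Titchmarsh input from the medium range is exactly what forces the threshold $k \geq (5d+3)/9$. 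I expect the entire technical difficulty of the proof to sit here: extracting a power-saving bound of the form $S(X,d) \ll X^{1-\delta}/d^\eta$ uniformly in this large-modulus regime, since by comparison the main-term extraction and the medium-range estimate are routine once that bound is in hand.
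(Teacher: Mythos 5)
This proposition is not proved in the paper at all: it is quoted verbatim from Heath-Brown \cite[Theorem 1.2]{heath-brown} and used as a black box, so there is no internal proof to compare yours against. Measured against Heath-Brown's actual argument, your skeleton is the right one --- M\"obius inversion over $k$-th power divisors, Siegel--Walfisz for moduli $d^{k}\leq(\log X)^{B}$ to extract the Euler product (with the hypothesis on $n_{p}$ guaranteeing $\rho^{\prime}_{f}(p^{k})<\phi(p^{k})$, hence a nonzero constant), elementary or Brun--Titchmarsh bounds for intermediate moduli, and a determinant-method count for large moduli. Two caveats on the middle range: your cutoff $D_{2}=X^{\beta}$ with $\beta<d/k$ is not the right condition --- Brun--Titchmarsh (or even the trivial bound $X/d^{k}+1$) only works while the modulus $d^{k}$ stays below $X$, i.e.\ for $d\leq X^{1/k-\varepsilon}$ --- so the ``large range'' you must treat by other means is really $X^{1/k}\ll d\ll X^{d/k}$, which is wider than your write-up suggests.

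The genuine gap is that the entire content of the theorem sits in the step you explicitly do not carry out. For $d^{k}>X$ each admissible residue class contains at most one prime, so the problem becomes counting solutions of $n^{d}+c=d_{0}^{k}m$ with $n\leq X$ over a huge range of $d_{0}$, and it is precisely Heath-Brown's $q$-adic determinant method applied to this equation that produces a power-saving bound and, after optimization, the threshold $k\geq(5d+3)/9$. Saying ``I would invoke the determinant method and expect the difficulty to sit here'' names the right tool but proves nothing: without the explicit auxiliary-determinant construction, the choice of parameters, and the resulting exponent, one cannot verify that the error terms in this range are $O_{A}(X/(\log X)^{A})$, nor where the constant $(5d+3)/9$ comes from. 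As a reconstruction of the strategy your proposal is faithful; as a proof it is incomplete in exactly the place where the theorem lives.
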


\begin{rmk}
In Proposition \ref{square-free proposition}, we immediately see that the hypotheses are satisfied for the particular choice $d = k = 2$ and $c = -2, 1, 2$ and $4$. In that case, $f$ is a quadratic polynomial and hence $\rho^{\prime}_{f}(p^{2}) = 0 \mbox{ or } 2$, depending on the appropriate congruence class of $p$ modulo $8$. In other words, $\rho^{\prime}_{f}(p^{2})$ is an absolute constant and $\phi(p^{2}) = p^{2} - p$ is of the order of $p^{2}$ for large enough $p$. Thus the Euler product in \eqref{heath-brown estimate} converges to a non-zero constant and hence the square-free values of the respective quadratic polynomials indeed have positive relative density in the set of prime numbers.
\end{rmk}

In view of Proposition \ref{greenberg-proposition}, to establish the $p$-rationality of a real quadartic field $K$ in which $p$ is unramified, it is required to prove that the fundamental unit is not a $p^{\rm{th}}$-power in the completion $K_{\mathfrak{p}}$ for some prime $\mathfrak{p}$ of $K$ lying above $p$. Let $\varepsilon$ be the fundamental unit of $K$ and let $q: = p^f$, where $f$ is the residual degree of $p$ in $K$. Then $f$ either 1 or 2, depending on whether $p$ is an inert or a split prime. The unit $\varepsilon^{q-1}$ is a principal unit (that is, it belongs to $U_{\mathfrak{p}}^{(1)} := 1 + \mathfrak{p}\mathcal{O}_{\mathfrak{p}}$) in the completion $K_{\mathfrak{p}}$ and is locally a $p$-th power if and only if it belongs to $U_{\mathfrak{p}}^{(2)} := 1 + \mathfrak{p}^{2}\mathcal{O}_{\mathfrak{p}}$ (cf. \cite[Proposition 9, Chapter 14]{serre}). Let $\mathfrak{p}\mathcal{O}_{\mathfrak{p}}$ be the maximal ideal of $K_{\mathfrak{p}}$. Then, $\varepsilon$ is locally a $p$-th power if and only if when $\varepsilon^{q-1} \equiv 1 \pmod{\mathfrak{p}^{2}\mathcal{O}_{\mathfrak{p}}}$. Equivalently, $\varepsilon^{q} \equiv \varepsilon \pmod{\mathfrak{p}^{2}\mathcal{O}_{\mathfrak{p}}}.$ Also,
$$\varepsilon^{q-1} \equiv 1 \pmod{\mathfrak{p}^{2}\mathcal{O}_{\mathfrak{p}}} \Leftrightarrow \varepsilon^{1-q} \equiv 1 \pmod{\mathfrak{p}^{2}\mathcal{O}_{\mathfrak{p}}} \Leftrightarrow {(\varepsilon^{-1}})^{q} \equiv \varepsilon^{-1}\ \pmod{\mathfrak{p}^{2}\mathcal{O}_{\mathfrak{p}}}.$$ Consequently, we get that $\varepsilon^q - ({\varepsilon^{-1}})^{q} \equiv  \varepsilon-\varepsilon^{-1}\ \pmod{\mathfrak{p}^{2}\mathcal{O}_{\mathfrak{p}}}$, which, in turn, implies that $\dfrac{\varepsilon^q - ({\varepsilon^{-1}})^{q}}{\varepsilon-\varepsilon^{-1}} \equiv 1 \pmod{\mathfrak{p}^{2}\mathcal{O}_{\mathfrak{p}}}.$

\smallskip

Now, we define the {\it generalized Fibonacci number} by the recursion formula $\mathcal{F}_n := \dfrac{\varepsilon^n - ({\varepsilon^{-1}})^{n}}{\varepsilon-\varepsilon^{-1}}$ with the initial conditions $\mathcal{F}_0 = 0$ and $\mathcal{F}_1 = 1$. We note that $\varepsilon^{-1}$ can be replaced by $\bar\varepsilon$ whenever they are equal. The next lemma provides a necessary and sufficient condition for a real quadratic field to be $p$-rational in terms of the generalized Fibonacci number. 

\begin{lemma} (cf. \cite[Corollary 3.1]{jpaa})\label{Fibonacci-lemma}
Let $F$ be a real quadratic field in which the odd prime $p$ does not ramify. Then $F$ is $p$-rational if and only if the following conditions hold.
\begin{enumerate}
\item The class number $h_{F}$ of $F$ is indivisible by $p$.
\item The congruence $\mathcal{F}_q \not\equiv 1 \pmod{p^{2}}$ holds for the generalized Fibonacci number.
\end{enumerate}
\end{lemma}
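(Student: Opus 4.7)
The plan is to combine Greenberg's criterion (Proposition~\ref{greenberg-proposition}(1)) with the explicit calculation already carried out in the paragraphs preceding the lemma, and then to descend the $\mathfrak{p}$-adic congruence to a rational congruence modulo $p^{2}$.

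By Proposition~\ref{greenberg-proposition}(1), since $p$ is unramified in $F$, the field $F$ is $p$-rational if and only if $p \nmid h_{F}$ and the fundamental unit $\varepsilon$ is not a $p$-th power in some completion $K_{\mathfrak{p}}$ with $\mathfrak{p} \mid p$. This yields condition~(1) of the lemma immediately. For the second condition, I would invoke the derivation given in the excerpt: the decomposition $U_{\mathfrak{p}} \cong \mu_{q-1} \times U_{\mathfrak{p}}^{(1)}$, together with \cite[Proposition 9, Chapter 14]{serre} (which, in the unramified case, tells us that the $p$-th power map sends $U_{\mathfrak{p}}^{(1)}$ onto $U_{\mathfrak{p}}^{(2)}$), shows that $\varepsilon$ is locally a $p$-th power if and only if $\varepsilon^{q-1} \in 1 + \mathfrak{p}^{2}\mathcal{O}_{\mathfrak{p}}$. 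The algebraic manipulation displayed in the excerpt then converts this into
$$
\mathcal{F}_{q} \;\equiv\; 1 \pmod{\mathfrak{p}^{2}\mathcal{O}_{\mathfrak{p}}}.
$$

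The remaining step is to verify that $\mathcal{F}_{q}$ is a rational integer, so that the above $\mathfrak{p}$-adic congruence descends to a rational one. Galois-invariance is clear in both cases: if $N(\varepsilon) = 1$ then $\varepsilon^{-1} = \bar\varepsilon$ and $\mathcal{F}_{n}$ is manifestly symmetric in $\varepsilon,\bar\varepsilon$, while if $N(\varepsilon) = -1$ then $\varepsilon^{-1} = -\bar\varepsilon$, making $\mathcal{F}_{n}$ Galois-invariant precisely for odd $n$, and $q = p^{f}$ is odd since $p$ is odd. Algebraic-integrality follows from a Lucas-type recursion applied to $\varepsilon$ (when $N(\varepsilon)=1$) or to $\varepsilon^{2}$ (when $N(\varepsilon)=-1$). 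Since $p$ is unramified in $F$, one has $\mathfrak{p}^{2}\mathcal{O}_{\mathfrak{p}} \cap \mathbb{Z} = p^{2}\mathbb{Z}$, so for the rational integer $\mathcal{F}_{q}$ the congruence $\mathcal{F}_{q} \equiv 1 \pmod{\mathfrak{p}^{2}\mathcal{O}_{\mathfrak{p}}}$ is equivalent to $\mathcal{F}_{q} \equiv 1 \pmod{p^{2}}$, giving condition~(2).

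The one delicate point I expect to be the main obstacle is the division by $\varepsilon - \varepsilon^{-1}$ in passing from $\varepsilon^{q} \equiv \varepsilon \pmod{\mathfrak{p}^{2}\mathcal{O}_{\mathfrak{p}}}$ to $\mathcal{F}_{q} \equiv 1 \pmod{\mathfrak{p}^{2}\mathcal{O}_{\mathfrak{p}}}$: this requires $\varepsilon - \varepsilon^{-1}$ to be a $\mathfrak{p}$-adic unit, equivalently $\varepsilon \not\equiv \pm 1 \pmod{\mathfrak{p}}$. I would dispatch the degenerate case $\varepsilon \equiv \pm 1 \pmod{\mathfrak{p}}$ either by a direct refinement of the filtration argument (noting that then $\pm \varepsilon \in U_{\mathfrak{p}}^{(1)}$, so one can work with $\varepsilon$ in place of $\varepsilon^{q-1}$ throughout), or by appealing to the precise formulation of \cite[Corollary 3.1]{jpaa}, where this subtlety is handled uniformly. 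Once this technicality is cleared, the chain of equivalences established above delivers the lemma.
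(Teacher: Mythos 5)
Your overall route coincides with the paper's: the paper does not give an independent proof of this lemma either, but derives precisely your chain of congruences in the paragraph preceding the statement (Proposition~\ref{greenberg-proposition}(1) for condition~(1), the filtration $U_{\mathfrak{p}}^{(1)}\supset U_{\mathfrak{p}}^{(2)}=(U_{\mathfrak{p}}^{(1)})^{p}$ from \cite{serre} for condition~(2)) and then defers to \cite[Corollary 3.1]{jpaa}. Your added remarks on the Galois-invariance and integrality of $\mathcal{F}_{q}$, and on descending the $\mathfrak{p}$-adic congruence to one modulo $p^{2}$, are correct and are indeed passed over in silence in the paper.

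There is, however, a genuine gap at exactly the point you flag, and your two proposed repairs do not close it. The case $\varepsilon\equiv\pm 1\pmod{\mathfrak{p}}$ is not a degenerate case to be dispatched: it is the \emph{only} case that occurs in this paper's applications. For $\varepsilon=p+\sqrt{p^{2}+1}$ (the unit used in Theorem~\ref{real-case}) one has $N(\varepsilon)=-1$, so $\varepsilon^{-1}=-p+\sqrt{p^{2}+1}$ and $\varepsilon-\varepsilon^{-1}=2p$, which is never a $\mathfrak{p}$-adic unit. Dividing the congruence $\varepsilon^{q}-\varepsilon^{-q}\equiv\varepsilon-\varepsilon^{-1}\pmod{\mathfrak{p}^{2}\mathcal{O}_{\mathfrak{p}}}$ by $2p$ then yields only $\mathcal{F}_{q}\equiv 1\pmod{\mathfrak{p}}$, i.e.\ a congruence modulo $p$ rather than modulo $p^{2}$, so the asserted equivalence between ``$\varepsilon$ is locally a $p$-th power'' and ``$\mathcal{F}_{q}\equiv 1\pmod{p^{2}}$'' does not follow from the displayed chain in the very situation where the lemma is applied. (The application in Section~4 happens to survive because there $\mathcal{F}_{p}\equiv p\pmod{p^{2}}$, which already violates the congruence modulo $p$.) Your first suggested fix --- working with $\varepsilon$ itself inside $U_{\mathfrak{p}}^{(1)}$ --- produces a different criterion ($\varepsilon\equiv\pm 1\pmod{\mathfrak{p}^{2}\mathcal{O}_{\mathfrak{p}}}$) and does not recover the stated Fibonacci congruence; your second fix, appealing to the ``precise formulation'' of \cite[Corollary 3.1]{jpaa}, is circular, since that corollary is the statement being proved. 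Closing the gap requires tracking the exact power of $\mathfrak{p}$ dividing $\varepsilon-\varepsilon^{-1}$ and renormalizing $\mathcal{F}_{n}$ accordingly, as is done in \cite{jpaa}. A smaller mismatch: Proposition~\ref{greenberg-proposition} is stated only for $p\geq 5$, whereas the lemma claims all odd primes $p$.
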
 

\section{Proof of Theorem \ref{imaginary-case}}

By Proposition \ref{key-proposition}, there exist infinitely many primes $p$ such that $p - j$ has a divisor $\ell^{2}$ such that $\ell > (\log p)^{2}$ for each $j \in \{1,\ldots,k\}$. Let $K_{j} := \mathbb{Q}(\sqrt{-(p - j)})$. Then the discriminant $d_{K_{j}}$ of $K_{j}$ satisfies the inequality $$|d_{K_{j}}| \leq 4 \times \mbox {square-free part of } (p - j) \leq \frac{4(p - j)}{(\log p)^{4}}.$$ Therefore, by using Proposition \ref{louboutin-proposition}, we obtain 

\begin{equation}\label{K_{j}}
h_{K_{j}} \leq \frac{\omega_{K_{j}}}{4\pi}\sqrt{\frac{4(p - j)}{(\log p)^{4}}}\left(\log \left(\frac{4(p - j)}{(\log p)^{4}}\right) + \frac{3}{2}\right).
\end{equation}
Since $\omega_{K_{j}} = 2,4$ or $6$, we conclude from \eqref{K_{j}} that $h_{K_{j}} \ll \frac{\sqrt{p}}{\log p} \ll p$. Consequently, $h_{K_{j}} < p$ for sufficiently large primes $p$ and therefore, $p$ does not divide $h_{K_{j}}$. By Proposition \ref{greenberg-proposition}, we conclude that each $K_{j}$ is $p$-rational. 

\smallskip

Similarly, we let $F_{j} := \mathbb{Q}(\sqrt{-p(p - j)})$. Then the discriminant $d_{F_{j}}$ of $F_{j}$ satisfies the inequality $$|d_{F_{j}}| \leq 4\times \mbox {square-free part of } p(p - j) \leq \frac{4p(p - j)}{(\log p)^{4}}.$$ Therefore, by using Proposition \ref{louboutin-proposition}, we obtain 

\begin{equation}\label{F_{j}}
h_{F_{j}} \leq \frac{\omega_{F_{j}}}{4\pi}\sqrt{\frac{4p(p - j)}{(\log p)^{4}}}\left(\log \left(\frac{4p(p - j)}{(\log p)^{4}}\right) + \frac{3}{2}\right) \ll \frac{\sqrt{p(p - j)}}{\log p} \ll p.
\end{equation}

Hence $p$ does not divide $h_{F_{j}}$ and thus by Proposition \ref{greenberg-proposition}, we conclude that $F_{j}$ is $p$-rational. This completes the proof of Theorem \ref{imaginary-case}. $\hfill\Box$

\section{Proof of Theorem \ref{real-case}}

We give a complete proof for the field $K = \mathbb{Q}(\sqrt{p^{2} + 1})$, assuming that $p^{2} + 1$ is square-free. The proofs for other three fields follow similar line of argument.

By using Lemma \ref{Fibonacci-lemma}, we first prove that the fundamental unit $\varepsilon$ is not a $p^{\rm{th}}$-power in $K_{\mathfrak{p}}$. Since $p$ is odd, $p^2 + 1 \equiv 2 \pmod{4}$. Since $p^2 + 1$ is square free, we have that $d_K = 4(p^2 +1).$ Also, $\varepsilon = p + \sqrt{p^2 +1}$ is a fundamental unit of $K$ (cf. \cite[Chapter 8]{ram}). Let $q \in \mathbb{N}$ be odd. Using the binomial theorem, we have $\varepsilon^q = (p + \sqrt{p^2 +1})^q =(\sqrt{p^2 +1})^q + qp(\sqrt{p^2 +1})^{q-1} + \cdots + qp^{q-1}(\sqrt{p^2 +1})+ p^q$. Similarly, $({\varepsilon^{-1}})^q = (-p + \sqrt{p^2 +1})^q = (\sqrt{p^2 +1})^q - qp(\sqrt{p^2 +1})^{q-1} + \cdots + qp^{q-1}(\sqrt{p^2 +1})- p^q.$\\
Therefore, the $q^{\rm{th}}$ generalized Fibonacci number is 
\begin{align*}
\mathcal{F}_q &= \dfrac{\varepsilon^q - ({\varepsilon^{-1}})^q }{\varepsilon - \varepsilon^{-1}}\\
 &= \dfrac{2qp(\sqrt{p^2 +1})^{q-1} + \cdots + 2\frac{q(q-1)}{2}p^{q-2}(\sqrt{p^2 +1})^{2} + 2p^q}{2p}\\
 &= q(p^2 +1)^{\frac{q-1}{2}} + \cdots + \frac{q(q-1)}{2}p^{q-3}(p^2 +1) + p^{q-1}.
\end{align*}
If $q =p,$ then $\mathcal{F}_q = p(p^2 +1)^{\frac{p-1}{2}} + \cdots + \frac{p(p-1)}{2}p^{p-3}(p^2 +1) + p^{p-1}.$ Since $p-1$ is even, taking congruence $\pmod {p^2}$, we get
$$\mathcal{F}_q \equiv p(p^2 +1)^{\frac{p-1}{2}} \equiv p\ \not\equiv 1 \pmod {p^{2}}.$$

If $q = p^2$, then $\mathcal{F}_q \equiv 0\ \not\equiv 1 \pmod {p^{2}}$. Thus, in either case, the $q^{\rm{th}}$ generalized Fibonacci number $\mathcal{F}_q \not\equiv 1 \pmod {p^{2}}$. Consequently, by Lemma \ref{Fibonacci-lemma}, the fundamental unit $\varepsilon$ is not a $p^{\rm{th}}$-power in the completion $K_{\mathfrak{p}}$. 

\smallskip
 
Next, we prove that the class number $h_{K}$ is not divisible by $p$, for sufficiently large primes $p$. From Dirichlet's class number formula, we have $h_K =\dfrac{ L(1, \chi_F)\sqrt{d_K}}{2R_K}$, where $R_K := \log(\varepsilon)$ is the regulator of $K$. Since $2$ is ramified in $K$, it further leads to the following inequality \cite[Corollary 2]{loub} $$ L(1, \chi_K) \leq \dfrac{\log d_K + \kappa_2}{4},$$ 
where $\kappa_2 := 2 + \gamma - \log(\pi) \sim 1.432\ldots$, and $\gamma$ is the Euler's constant. Using $R_K = \log(p + \sqrt{p^2 +1})$, we get the following inequality

\begin{eqnarray*}
h_K &<& \dfrac{(\log d_K +2)}{4}\times \dfrac{\sqrt{d_K}}{2R_K}\\
  &=& \dfrac{\log(4 ( p^2 +1)) +2}{4}\times \dfrac{\sqrt{4(p^2 +1)}}{2\log(p + \sqrt{p^2 +1})}\\
  &=& \dfrac{\log(4 ( p^2 +1)) +2}{2}\times \dfrac{\sqrt{p^2 +1}}{\log((p + \sqrt{p^2 +1})^2)}\\
  &=& \dfrac{\left( \log(4 ( p^2 +1)) +2 \right)\sqrt{p^2+1}}{2\log(p^2 + p^2 + 1 + 2p\sqrt{p^2+1})}\\
  &=& \left( \dfrac{\log(4 ( p^2 +1))}{2\log(2p^2 + 1 + 2p\sqrt{p^2+1})} + \dfrac{1}{\log(2p^2 + 1 + 2p\sqrt{p^2+1})} \right)\sqrt{p^2 +1}.
\end{eqnarray*}
Since $\sqrt{p^2 +1} > p$, we obtain
\begin{align*}
2\log(2p^2 + 1 + 2p\sqrt{p^2+1}) &> 2\log(2p^2 + 1 + 2p^2) \\
 &= 2\log(4p^2 + 1) = \log( (4p^2 + 1)^2)\\
 &= \log( 16p^4 + 8p^2 + 1).
\end{align*}
Therefore, $h_K < \left( \dfrac{\log(4 ( p^2 +1))}{\log(16p^4 + 8p^2 + 1)} + \dfrac{1}{\log(2p^2 + 1 + 2p\sqrt{p^2+1})} \right)\sqrt{p^2 +1}.$\\
For sufficiently large $p$, the quantity inside the bracket becomes smaller than $1$ and consequently, $h_K < p$, implying that $p$ does not divide $h_K$  for sufficiently large primes $p$. Therefore, $K = \mathbb{Q}(\sqrt{p^2 + 1})$ is $p$-rational whenever $p^2 +1$ is square-free. Since by Proposition \ref{heath-brown estimate}, there exist infinitely many primes $p$ for which $p^{2} + 1$ is square-free, we obtain the $p$-rationality for infinitely many such fields. $\hfill\Box$

\section{Proof of Corollary \ref{corollary}}

In the light of Theorem \ref{real-case}, it suffices to prove that there exist infinitely many prime numbers $p$, with positive lower relative density, such that all the integers $p^{2} - 2, p^{2} + 1, p^{2} + 2$ and $p^{2} + 4$ are simultaneously square-free. Let $$A_{1}:= \{p \mbox{ prime } : p^{2} + 1 \mbox{ is square-free }\},$$ $$A_{2} := \{p \mbox{ prime } : p^{2} -2  \mbox{ is square-free }\},$$ $$A_{3} := \{p \mbox{ prime } : p^{2} + 2 \mbox{ is square-free }\},$$ $$A_{4} := \{p \mbox{ prime } : p^{2} + 4 \mbox{ is square-free }\}.$$ For a positive real number $X$, let $A_{i}(X) := \{p \in A_{i} : p \leq X\}$, for each $i = 1,2,3$ and $4$. We prove that the set $\displaystyle\bigcap_{i = 1}^{4} A_{i}$ has a positive lower relative density.

\smallskip

For each $i \in \{1,2,3,4\}$, let $\delta(A_{i})$ denote the relative density of $A_{i}$ in the set of all prime numbers. That is, $\delta(A_{i}) := \displaystyle\lim_{X \to \infty} \frac{\#A_{i}(X)}{\pi(X)}$. Then by Proposition \ref{square-free proposition}, we conclude that $\delta(A_{i})$ exists for each $i$. Now, for $A_{1}$, by Proposition \ref{square-free proposition}, we have $$\delta(A_{1}) = \displaystyle\prod_{p \equiv 1 \pmod {4}}\left(1 - \frac{2}{p(p - 1)}\right) > \displaystyle\prod_{k = 1}^{\infty}\left(1 - \frac{2}{4k(4k + 1)}\right) \sim 0.834.$$ Similarly, we obtain $\delta(A_{2}) \geq 0.931$, $\delta(A_{3}) \geq 0.920$ and $\delta(A_{4}) \geq 0.834$. Now, we obtain
 
\begin{eqnarray*}
\displaystyle\liminf_{X \to \infty} \frac{\#(A_{1}(X) \cap A_{2}(X))}{\pi(X)} &\geq &  \displaystyle\liminf_{X \to \infty} \frac{\#A_{1}(X)}{\pi(X)}+ \displaystyle\liminf_{X \to \infty} \frac{\#A_{2}(X)}{\pi(X)} - \displaystyle\limsup_{X\to \infty} \frac{\#(A_{1}(X) \cup A_{2}(X))}{\pi(X)}\\ & \geq & 0.834 + 0.931 - 1\\ & = & 0.765.
\end{eqnarray*}

%Now, let $B(X) := A_{1}(X) \cap A_{2}(X)$. 

Similarly, proceeding as above, we obtain $\displaystyle\liminf_{X \to \infty} \dfrac{\#((A_{1}(X)\cap A_{2}(X)) \cap A_{3}(X))}{\pi(X)} \geq 0.685$. Finally, using the above estimate, we obtain $\displaystyle\liminf_{X \to \infty} \dfrac{\#\left(\displaystyle\bigcap_{i = 1}^{4}A_{i}(X)\right)}{\pi(X)} \geq 0.519$. This completes the proof of the corollary. $\hfill\Box$

% From Theorem \ref{real-case}, we already have that the respective real quadratic fields are $p$-rational when the fundamental discriminants are square-free. Consequently, there exist infinitely many primes $p$ such that all the fields in Theorem \ref{real-case} are simultaneously $p$-rational. 

\section{Concluding remarks}

Towards the end of \cite{jpaa}, the totally real bi-quadratic field $K_{\alpha} := \mathbb{Q}(\sqrt{\alpha p(\alpha p + 2)},\sqrt{\alpha p (\alpha p - 2)})$ has been mentioned, where $p > 3$ is a prime number and $\alpha$ is a postive integer with $\gcd(\alpha, p) = 1$. It has been explicitly written (at page number $15$ of \cite{jpaa}) that ``{\it ... the fundamental unit of each subfield of $K_{\alpha}$ is not locally a $p^{{\rm{th}}}$ power at the $p$-adic places. So $K_{\alpha}$ is $p$-rational as soon as $p$ does not divide $h_{K_{\alpha}}$.Though there exist $\alpha$ such that $p \mid h_{K_{\alpha}}$ (for instance with $p = 5$ and $\alpha = 17$), it would be interesting to find an infinite family of integers $\alpha$ for which $p$ does not divide $h_{K_{\alpha}}$.}"

\smallskip

Here, we make an attempt to address this problem by using Proposition \ref{key-proposition}. We notice that the quadratic subfields of $K_{\alpha}$ are precisely $K_{1} := \mathbb{Q}(\sqrt{\alpha p(\alpha p + 2)})$, $K_{2} := \mathbb{Q}(\sqrt{\alpha p(\alpha p - 2)})$ and $K_{3} := \mathbb{Q}(\sqrt{(\alpha p - 2)(\alpha p + 2)})$. Then as in the system of congruence in \eqref{series}, we may consider 

\begin{equation}\label{series-new}
\left\{\begin{array}{ll}
x \equiv 2\alpha^{-1} \pmod {m^{2}}\\
x \equiv -2\alpha^{-1} \pmod {n^{2}}
\end{array}\right.
\end{equation}
where $m$ and $n$ are integers suitably chosen in certain range of $\log X$ such that $\gcd(m,n) = \gcd(m,\alpha) = \gcd(n,\alpha) =  1$. This choice is possible because of Proposition \ref{key-proposition}. Dirichlet's theorem for primes in arithmetic progressions asserts that there exist infinitely many prime numbers $p$ satisfying the system of congruence \eqref{series-new}. In other words, there exist infinitely many prime numbers $p$ such that both $\alpha p - 2$ and $\alpha p + 2$ have square divisors $> (\log p)^{A}$ for arbitrary but fixed constant $A > 0$.

\smallskip

Hence for those choices of prime numbers $p$ and integers $\alpha \ll \log \log p$, using Le's bound for class numbers of real quadratic fields (cf. \cite[Theorem (a)]{le}), we have $$h_{K_{1}} \leq \frac{1}{2}\sqrt{d_{K_{1}}}  \leq \frac{1}{2}\sqrt{\frac{\alpha p(\alpha p + 2)}{(\log p)^{4}}} \ll \frac{p\log \log p}{(\log p)^{2}} \ll p.$$ Consequently, for sufficiently large such prime numbers $p$, we conclude that $p$ does not divide $h_{K_{1}}$. Similarly, for $h_{K_{2}}$ and $h_{K_{3}}$ also we arrive at the same conclusion. Thus we proved the following proposition.

\begin{propn}
There exist infinitely many prime numbers $p$ and for each such $p$, there exist finitely many positive integers $\alpha$ with $\gcd(\alpha,p) = 1$ such that each $K_{\alpha}$ is $p$-rational.
\end{propn}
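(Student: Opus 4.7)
The plan is to apply Proposition \ref{later-added}: since $K_{\alpha}$ is abelian of degree $4$ over $\mathbb{Q}$ with Galois group $(\mathbb{Z}/2\mathbb{Z})^{2}$, its cyclic subfields are precisely $\mathbb{Q}$ and the three quadratic fields $K_{1}, K_{2}, K_{3}$ listed in the preceding paragraph, so it suffices to show that each $K_{i}$ is $p$-rational. The quoted passage from \cite{jpaa} already handles the fundamental-unit condition in Proposition \ref{greenberg-proposition}(1) for each $K_{i}$, so the entire argument reduces to verifying that $p \nmid h_{K_{i}}$ for $i = 1, 2, 3$.

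To arrange this, I would produce primes $p$ for which both $\alpha p - 2$ and $\alpha p + 2$ possess large square divisors, using an adaptation of Proposition \ref{key-proposition}. For a fixed real number $A > 0$ and a large parameter $X$, choose coprime integers $m, n$ in a window of size $\asymp (\log X)^{A}$, both coprime to $2\alpha$; the admissibility step inside the proof of Proposition \ref{key-proposition} guarantees that such a choice is possible. By the Chinese remainder theorem, the system \eqref{series-new} has a unique solution $\ell \pmod{m^{2} n^{2}}$, automatically coprime to the modulus. Because $m^{2} n^{2} \leq (\log X)^{4A}$, the uniform prime-counting estimate \cite[Lemma 2.9]{elliot} recalled in the proof of Proposition \ref{key-proposition} then furnishes infinitely many primes $p \equiv \ell \pmod{m^{2} n^{2}}$ of size $\asymp X$. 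For each such $p$, the quantities $\alpha p - 2$ and $\alpha p + 2$ carry square divisors exceeding $(\log p)^{2A}$, which (taking $A = 2$) matches the scaling $(\log p)^{4}$ used in the paper.

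Consequently, the square-free parts of $\alpha p(\alpha p + 2)$, $\alpha p(\alpha p - 2)$ and $(\alpha p - 2)(\alpha p + 2)$ are all $O\!\left(\alpha^{2} p^{2} / (\log p)^{4}\right)$, and the same bound holds for the discriminants $d_{K_{i}}$ up to a constant. Le's inequality $h_{K} \leq \tfrac{1}{2}\sqrt{d_{K}}$ for real quadratic fields \cite[Theorem (a)]{le} then gives
\[
h_{K_{i}} \ll \frac{\alpha p}{(\log p)^{2}} \qquad (i = 1, 2, 3).
\]
If one restricts to positive integers $\alpha$ with $\gcd(\alpha, p) = 1$ and $\alpha \ll \log \log p$, this forces $h_{K_{i}} \ll p \log \log p / (\log p)^{2} < p$ for all sufficiently large such $p$, so $p \nmid h_{K_{i}}$, and Proposition \ref{later-added} closes the argument. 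Note that the condition $\alpha \ll \log\log p$ makes the admissible $\alpha$ for each fixed $p$ into a finite set, which is precisely what the statement asserts.

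The main delicate step is the joint congruence construction rather than the class-number bound: one must simultaneously secure mutual coprimality of $m^{2}$ and $n^{2}$, coprimality with $2\alpha$, solvability of \eqref{series-new} with $\gcd(\ell, m^{2} n^{2}) = 1$, and the range condition $m^{2} n^{2} \leq (\log X)^{c}$ that keeps \cite[Lemma 2.9]{elliot} applicable. All four constraints are compatible thanks to the admissibility procedure of Proposition \ref{key-proposition}, but they must be arranged carefully, and the allowable growth of $\alpha$ is dictated by how much one can sharpen the square-factor estimates against the exponent on $\log p$ in Le's bound.
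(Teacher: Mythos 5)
Your proposal follows essentially the same route as the paper's own argument: reduce to the three quadratic subfields via Proposition \ref{later-added}, use the congruence system \eqref{series-new} to force large square divisors of $\alpha p \pm 2$, and combine Le's bound with the restriction $\alpha \ll \log\log p$ to get $h_{K_{i}} < p$, with the fundamental-unit condition supplied by the quoted remark from \cite{jpaa}. The only difference is that you invoke the quantitative estimate of \cite[Lemma 2.9]{elliot} where the paper cites Dirichlet's theorem, which is if anything slightly more careful.
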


Moreover, by the concluding remarks in \cite{kop}, we obtain the following corollary.

\begin{cor}
There exist infinitely many prime numbers $p$ such that the tri-quadratic field $K_{\alpha}(\sqrt{-1})$ is $p$-rational.
\end{cor}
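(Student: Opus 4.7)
The plan is to reduce the $p$-rationality of the degree-$8$ abelian field $K_{\alpha}(\sqrt{-1})$ to a statement about its quadratic subfields via Greenberg's criterion, and then handle each of those subfields using the tools already developed in the paper. Since the Galois group ${\rm{Gal}}(K_{\alpha}(\sqrt{-1})/\mathbb{Q}) \cong (\mathbb{Z}/2\mathbb{Z})^{3}$ has order $8$ coprime to the odd prime $p$, Proposition \ref{later-added} applies, so it suffices to verify $p$-rationality for each of the seven quadratic subfields.

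Three of these subfields are the real quadratic fields $K_{1}, K_{2}, K_{3}$ whose $p$-rationality has just been established in the preceding proposition. The subfield $\mathbb{Q}(\sqrt{-1})$ has class number one, and the cyclotomic $\mathbb{Z}_{p}$-extension of $\mathbb{Q}$ lifted to $\mathbb{Q}(\sqrt{-1})$ takes care of the relevant criterion; more directly, Proposition \ref{greenberg-proposition}(2) immediately gives $p$-rationality since $p \nmid 1$. The remaining three subfields are the imaginary quadratic fields
\[
L_{1} := \mathbb{Q}(\sqrt{-\alpha p(\alpha p + 2)}),\ \ L_{2} := \mathbb{Q}(\sqrt{-\alpha p(\alpha p - 2)}),\ \ L_{3} := \mathbb{Q}(\sqrt{-(\alpha p - 2)(\alpha p + 2)}).
\]
For these, the plan is to show that $p \nmid h_{L_{j}}$ for each $j \in \{1,2,3\}$, which by Proposition \ref{greenberg-proposition}(2) is enough.

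To bound the class numbers, I would use the fact, already exploited in the construction before the preceding proposition, that Proposition \ref{key-proposition} produces infinitely many primes $p$ for which both $\alpha p - 2$ and $\alpha p + 2$ have square divisors larger than $(\log p)^{A}$ for any fixed constant $A > 0$. Taking $A$ sufficiently large, the square-free parts of $\alpha p(\alpha p \pm 2)$ and of $(\alpha p - 2)(\alpha p + 2)$ are each $O(p^{2}/(\log p)^{2A})$, so the corresponding fundamental discriminants $|d_{L_{j}}|$ are bounded by the same quantity (up to a factor of $4$). Plugging this into Louboutin's bound (Proposition \ref{louboutin-proposition}) gives
\[
h_{L_{j}} \ll \frac{p \log p}{(\log p)^{A}} \ll p,
\]
so that $h_{L_{j}} < p$ once $p$ is large enough; in particular, $p \nmid h_{L_{j}}$.

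Combining these three ingredients, every quadratic subfield of $K_{\alpha}(\sqrt{-1})$ is $p$-rational, whence $K_{\alpha}(\sqrt{-1})$ itself is $p$-rational by Proposition \ref{later-added}. The main obstacle — really the only non-routine point — is ensuring that the simultaneous square divisor condition on $\alpha p - 2$ and $\alpha p + 2$ yields square-free parts small enough for Louboutin's estimate to beat $p$; this is exactly why the system of congruences \eqref{series-new} was set up, so the work is already essentially done, and everything else is bookkeeping with the criteria from Section~2.
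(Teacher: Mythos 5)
Your proof is correct and is essentially the argument the paper intends: the paper itself disposes of this corollary with a one-line appeal to the concluding remarks of \cite{kop}, but your reduction via Proposition \ref{later-added} to the seven quadratic subfields --- the three real ones covered by the preceding proposition, $\mathbb{Q}(\sqrt{-1})$ trivially, and the three imaginary ones handled by feeding the square divisors of $\alpha p \pm 2$ into Louboutin's bound --- is exactly the template the paper applies to its companion proposition on $F_{\alpha}$. The only detail worth flagging is that your estimate $h_{L_j} \ll p$ tacitly requires the same restriction $\alpha \ll \log\log p$ (or $\alpha$ fixed) that the paper imposes when bounding $h_{K_{1}}$ in the preceding proposition.
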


By Proposition 4.4 of \cite{jpaa}, we know that the bi-quadratic field $\mathbb{Q}(\sqrt{p(p + 2)},\sqrt{p(p - 2)})$ is $p$-rational for all primes $p$. From this, another interesting thing to observe is the following proposition, which is in a similar spirit as that of Koperecz in \cite{kop}.

\begin{propn}
Let $\alpha \geq 1$ be an integer. Then there exist infinitely many primes $p$ such that the tri-quadratic field $F_{\alpha} := \mathbb{Q}(\sqrt{p(p + 2)},\sqrt{p(p - 2)},\sqrt{-\alpha})$ is $p$-rational.
\end{propn}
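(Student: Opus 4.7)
The plan is to invoke Greenberg's abelian criterion (Proposition \ref{later-added}) to reduce the $p$-rationality of $F_\alpha$ to that of its seven quadratic subfields, and then handle each subfield separately. For odd $p$ with $p \nmid \alpha$, the degree $[F_\alpha:\mathbb{Q}] = 8$ is coprime to $p$, and since $\sqrt{-\alpha}$ cannot lie in the totally real biquadratic field $\mathbb{Q}(\sqrt{p(p+2)},\sqrt{p(p-2)})$, the Galois group is $(\mathbb{Z}/2\mathbb{Z})^3$. Its seven quadratic subfields are the three ``generators'' $\mathbb{Q}(\sqrt{p(p+2)})$, $\mathbb{Q}(\sqrt{p(p-2)})$, $\mathbb{Q}(\sqrt{-\alpha})$; the three ``pairwise products'' $\mathbb{Q}(\sqrt{p^2-4})$, $\mathbb{Q}(\sqrt{-\alpha p(p+2)})$, $\mathbb{Q}(\sqrt{-\alpha p(p-2)})$; and the ``triple product'' $\mathbb{Q}(\sqrt{-\alpha(p^2-4)})$.

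The first three real subfields are taken care of by Proposition 4.4 of \cite{jpaa}, which shows the biquadratic field $\mathbb{Q}(\sqrt{p(p+2)},\sqrt{p(p-2)})$ is $p$-rational for every prime $p \geq 5$; a second application of Proposition \ref{later-added} to that biquadratic yields the $p$-rationality of each of its three quadratic subfields. The field $\mathbb{Q}(\sqrt{-\alpha})$ has class number depending only on $\alpha$, so Proposition \ref{greenberg-proposition}(2) makes it $p$-rational for every prime $p$ exceeding this fixed constant. Thus only the three ``mixed'' imaginary quadratic subfields $\mathbb{Q}(\sqrt{-\alpha p(p+2)})$, $\mathbb{Q}(\sqrt{-\alpha p(p-2)})$ and $\mathbb{Q}(\sqrt{-\alpha(p^2-4)})$ remain to be dispatched.

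For these three, I would apply Proposition \ref{key-proposition} with $s = 2$, $(r_1, r_2) = (2, -2)$ and a sufficiently large constant $A$, producing infinitely many primes $p$ for which both $p-2$ and $p+2$ carry square divisors exceeding $(\log p)^{A}$. For any such $p$ that is additionally coprime to $\alpha$ and larger than the (fixed) class number of $\mathbb{Q}(\sqrt{-\alpha})$, the square-free part of each of $\alpha p(p\pm 2)$ and $\alpha(p^2-4)$ is bounded by $\alpha p^{2}/(\log p)^{A}$, so the fundamental discriminant $d_K$ of each such subfield satisfies $|d_K| \ll \alpha p^{2}/(\log p)^{A}$. Louboutin's bound (Proposition \ref{louboutin-proposition}) then yields $h_K \ll \sqrt{\alpha}\, p \log p/(\log p)^{A/2} \ll p$ for $A$ large enough; hence $p \nmid h_K$ and Proposition \ref{greenberg-proposition}(2) gives the required $p$-rationality.

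The one real obstacle is arranging both $p-2$ and $p+2$ to carry large square divisors at a single prime $p$, which is precisely the content of Proposition \ref{key-proposition} applied with two simultaneous congruences; once this is secured, the class-number estimates for the three remaining imaginary subfields fall out of Louboutin's inequality, and a final application of Proposition \ref{later-added} assembles the seven subfield verifications into the $p$-rationality of $F_\alpha$.
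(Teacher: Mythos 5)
Your proposal is correct and follows essentially the same route as the paper: reduce via Proposition \ref{later-added} to the quadratic subfields, dispose of the real ones by the known $p$-rationality of $\mathbb{Q}(\sqrt{p(p+2)},\sqrt{p(p-2)})$, handle $\mathbb{Q}(\sqrt{-\alpha})$ by taking $p$ larger than its fixed class number, and use Proposition \ref{key-proposition} (simultaneous large square divisors of $p-2$ and $p+2$) together with Louboutin's bound to kill $p$-divisibility of the class numbers of the three mixed imaginary subfields. Your write-up is in fact slightly more explicit than the paper's about the subfield enumeration and the coprimality hypotheses.
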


\begin{proof}
In view of Proposition \ref{later-added}, it suffices to establish the $p$-rationality of the quadratic subfields of $F_{\alpha}$. Since the $p$-rationality is known for $\mathbb{Q}(\sqrt{p(p + 2)},\sqrt{p(p - 2)})$ for all primes $p$, we only need to check for the $p$-rationality of the imaginary quadratic fields $\mathbb{Q}(\sqrt{-\alpha})$, $\mathbb{Q}(\sqrt{-p\alpha(p + 2)})$, $\mathbb{Q}(\sqrt{-p\alpha(p - 2)})$ and $\mathbb{Q}(\sqrt{-\alpha(p - 2)(p + 2)})$. By Proposition \ref{key-proposition}, there exist infinitely many primes $p$ such that $p - 2$ and $p + 2$ simultaneously have large square divisors. Therefore, proceeding as in the proof of Theorem \ref{imaginary-case}, we obtain that $p$ does not divide the respective class numbers for sufficiently large primes $p$. Since $\alpha$ is a fixed integer, we can choose $p > h_{\mathbb{Q}(\sqrt{-\alpha})}$ and hence $h_{\mathbb{Q}(\sqrt{-\alpha})}$ is indivisible by $p$. Thus all the quadratic subfields of $F_{\alpha}$ are $p$-rational. Consequently, $F_{\alpha}$ is $p$-rational. This completes the proof of the proposition. 
\end{proof}

We furnish some values of the class numbers of the imaginary and real quadratic fields considered in Theorem \ref{imaginary-case} and Theorem \ref{real-case} for certain values of the prime $p$. In the following tables, we use the notation $h(d)$ to denote the class number of $\mathbb{Q}(\sqrt{d})$. The computations of the class numbers have been carried out using MAGMA.

\begin{table}[h!]
\begin{center}
\caption{Simultaneous $p$-rationality of $\mathbb{Q}(\sqrt{-(p - 1)}),\ldots, \mathbb{Q}(\sqrt{-(p - 5)})$.}
\smallskip
\label{tab:table7}
\begin{tabular}{|c|c|c|c|c|c|} % <-- Alignments: 1st column left, 2nd middle and 3rd right, with vertical lines in between
\hline 
$p$ & $h(-(p - 1))$ & $h(-(p - 2))$ & $h(-(p - 3))$ & $h(-(p - 4))$ & $h(-(p - 5))$ \\
\hline
%$11$ & $2$ & $1$ & $1$ & $1$ & $2$ \\
%\hline
%$13$ & $1$ & $1$ & $2$ & $1$ & $1$ \\
%\hline
%$17$ & $1$ & $2$ &  $4$ & $2$ & $1$ \\
%\hline
%$19$ & $1$ & $4$  & $1$ & $2$ & $4$ \\
%\hline
$23$ & $2$ & $4$ &  $2$ & $1$ & $1$ \\
\hline
$29$ & $1$ & $1$  & $6$ & $1$ & $2$ \\
\hline
$31$ & $4$ & $6$ & $1$ & $1$ & $6$ \\
\hline
$37$ & $1$ & $2$  & $4$ & $4$ & $1$ \\
\hline
$41$ & $2$ & $4$ & $6$ & $2$ & $1$ \\
\hline
$43$ & $4$ & $8$ & $2$ & $4$ & $6$ \\
\hline
$47$ & $4$ & $2$ & $1$ & $1$ & $4$ \\
\hline
$53$ & $2$ & $2$ & $1$ & $1$ & $1$ \\
\hline
$59$ & $2$ & $4$ & $4$ & $4$ & $2$ \\
\hline
$61$ & $2$ & $3$ & $2$ & $4$ & $4$ \\
\hline

\end{tabular}
\end{center}
\end{table}

\begin{table}[h!]
\begin{center}
\caption{Simultaneous $p$-rationality of $\mathbb{Q}(\sqrt{-p(p - 1)}),\ldots, \mathbb{Q}(\sqrt{-p(p - 5)})$.}
\smallskip
\label{tab:table8}
\begin{tabular}{|c|c|c|c|c|c|} % <-- Alignments: 1st column left, 2nd middle and 3rd right, with vertical lines in between
\hline 
$p$ & $h(-p(p - 1))$ & $h(-p(p - 2))$ & $h(-p(p - 3))$ & $h(-p(p - 4))$ & $h(-p(p - 5))$ \\
\hline
$7$ & $4$ & $2$ & $1$ & $4$ & $4$ \\
\hline
$13$ & $4$ & $10$ & $4$ & $2$ & $6$ \\
\hline
$17$ & $4$ & $12$ &  $8$ & $16$ & $2$ \\
\hline
$29$ & $4$ & $6$  & $20$ & $6$ & $12$ \\
\hline
$31$ & $24$ & $14$ &  $8$ & $4$ & $28$ \\
\hline
$37$ & $2$ & $36$  & $12$ & $32$ & $10$ \\
\hline
$53$ & $40$ & $28$ & $6$ & $6$ & $10$ \\
\hline
$59$ & $52$ & $16$  & $12$ & $48$ & $16$ \\
\hline
$71$ & $56$ & $16$ & $18$ & $52$ & $56$ \\
\hline
$79$ & $40$ & $24$ & $24$ & $12$ & $76$ \\
\hline

\end{tabular}
\end{center}
\end{table}

\begin{table}[h!]
\begin{center}
\caption{Simultaneous $p$-rationality of $\mathbb{Q}(\sqrt{p^2 +1})$, $\mathbb{Q}(\sqrt{p^2 - 2})$, $\mathbb{Q}(\sqrt{p^2 + 2})$ and $\mathbb{Q}(\sqrt{p^2 + 4})$.}
\smallskip
\label{tab:table1}
\begin{tabular}{|c|c|c|c|c|} % <-- Alignments: 1st column left, 2nd middle and 3rd right, with vertical lines in between
\hline 
$p$ & $p^2 +1 $ & $p^2 - 2$ & $p^2 + 2$ & $p^2 + 4$ \\
\hline
$3$ & $10$ & $7$ & $11$ & $13$ \\
\hline
$17$ & $290$ & $287$ & $291$ & $293$ \\
\hline
$37$ & $1370$ & $1367$ & $1371$ & $1373$ \\
\hline
$47$ & $2210$ & $2207$ & $2211$ & $2213$ \\
\hline
$53$ & $2810$ & $2807$ & $2811$ & $2813$ \\
\hline
$73$ & $5330$ & $5327$ & $5331$ & $5333$ \\
\hline
$79$ & $6242$ & $6239$ & $6243$ & $6245$ \\
\hline
$83$ & $6890$ & $6887$ & $6891$ & $6893$ \\
\hline
$97$ & $9410$ & $9407$ & $9411$ & $9413$ \\
\hline
\end{tabular}
\end{center}
\end{table}

\medskip

\bigskip

{\bf Acknowledgements.} We would like to thank IIT Guwahati for providing excellent facilities to carry out the research. The first author gratefully acknowledges the National Board of Higher Mathematics (NBHM) for the Post-Doctoral Fellowship (Order
No. 0204/16(12)/2020/R \& D-II/10925). The third author thanks MATRICS, SERB for the research grant MTR/2020/000467.

\end{document}